\theoremstyle{plain} 
\newtheorem{theorem}{Theorem}[section]
\newtheorem{corollary}[theorem]{Corollary}
\newtheorem{proposition}[theorem]{Proposition}
\newtheorem{lemma}[theorem]{Lemma}
\theoremstyle{definition} 
\newtheorem{example}[theorem]{Example}
\newtheorem{assumption}[theorem]{Assumption}
\theoremstyle{remark} 
\newtheorem{remark}[theorem]{Remark}
\numberwithin{equation}{section} 
\newcommand{\myitem}[1]{
\item[#1]\protected@edef\@currentlabel{#1}
}
\title[Surface finite element approximation of SPDEs with Whittle--Matérn noise]{Surface finite element approximation of parabolic SPDEs with Whittle--Matérn noise}
\author[Ø.~S.~Auestad]{Øyvind S. Auestad} 
\address[Øyvind S. Auestad]{\newline Department of Mathematical Sciences Norwegian University of Science and Technology, \newline 7034 Trondheim, Norway.} \email[(corresponding author)]{oyvinau@ntnu.no}
\author[G.-A.~Fuglstad]{Geir-Arne Fuglstad}
\address[Geir-Arne Fuglstad]{\newline Department of Mathematical Sciences Norwegian University of Science and Technology \newline NO--7491 Trondheim, Norway.} \email{geir-arne.fuglstad@ntnu.no}
\author[A.~Lang]{Annika Lang} \address[Annika Lang]{\newline Department of Mathematical Sciences Chalmers University of Technology and University of Gothenburg \newline S--412 96 G\"oteborg, Sweden.} \email{annika.lang@chalmers.se}
\begin{document}

\begin{abstract}
    We propose and analyse a new type of fully discrete surface finite element approximation of a class of linear parabolic stochastic evolution equations with additive noise. Our discretization uses a surface finite element approximation of the noise, and is tailored for equations with noise having covariance operator defined by (negative powers of) elliptic operators, like Whittle--Matérn random fields. We derive strong and pathwise convergence rates of our approximation, and verify these by numerical experiments.
\end{abstract}

\date{\today}
\subjclass{65C30, 65C20, 35K10, 35K51, 60H15.}
\keywords{stochastic partial differential equations, stochastic advection-diffusion equations, surface finite element methods.}

\maketitle

\section{Introduction}

We study linear parabolic stochastic evolution equations of the form
\begin{align}\label{eq:model-in-intro}
	du(t) = -A_1 u(t) \, dt + A_2^{-\gamma} \, dW(t), \quad u(0) = \xi, \quad t \in [0,T],
\end{align}
where $W$ is a cylindrical Wiener process on $L^2(\Gamma)$, $\Gamma \subseteq \mathbb{R}^{d+1}$ a compact hypersurface, $\gamma > d/4-1/2$ a parameter, $\xi$ a random initial condition, and $A_j$, $j = 1, 2$, second order elliptic operators of the form, 
\begin{align*}
	A_j = \alpha_j - \nabla_{\Gamma} \cdot \mathcal{A}_j \nabla_{\Gamma} + b_j \cdot \nabla_{\Gamma}, \quad j = 1, 2,
\end{align*}
with bounded and measurable coefficients---see Assumption \ref{assumption:model} for details. The model \eqref{eq:model-in-intro} has additive noise with Whittle--Matérn type covariance, and is popular in spatial statistics. In this paper we propose and analyse a fully discrete surface finite element approximation of \eqref{eq:model-in-intro}, which uses a backward Euler approximation in time and a quadrature approximation of the fractional power operator.

Numerical approximations of solutions to stochastic partial differential equations (SPDEs) have been studied mainly for equations posed on Euclidean domains over the last three decades. Important problems in the climate and environmental sciences and biology warrant the development and analysis of numerical approximations of SPDEs defined on domains like Earth or the surface of a cell. The literature on surface finite element approximations of parabolic SPDEs is scarce. On the other hand, spectral approximations of SPDEs on the sphere have been studied exhaustively (see, e.g., \cite{2015-lang, 2019-kazashi, 2022-lang, 2024-lang}). Further, spatial statisticians have used spectral approximations, ad-hoc finite element and finite volume approximations of models similar to \eqref{eq:model-in-intro} to define stochastic models on surfaces, and applied these to problems in the climate and environmental sciences (see, e.g., \cite{2011-rue, 2013-cameletti, 2020-fuglstad, 2022-lindgren, 2024-lindgren}).

Spectral approximations of \eqref{eq:model-in-intro} require knowledge of eigenvectors and eigenvalues of $A_j$, and therefore impose severe restrictions on $A_j$ and the domain $\Gamma$. Further, the models and numerical approximations applied in spatial statistics are often missing a rigorous formulation and convergence analysis. Thus, there is a gap between the available discretizations and convergence results in the numerical analysis literature, and the models that are actually being applied in statistics. The surface finite element approximation proposed in this paper is applicable for a large class of elliptic operators $A_j$, not necessarily selfadjoint, and arbitrary compact smooth hypersurfaces $\Gamma$. It is easy to implement, computationally efficient, and under standard conditions, one may verify strong and pathwise convergence rates identical to those in the analogous flat domain case (see \cite{2025-auestad}).

More precisely, for an approximate surface $\Gamma_h \approx \Gamma$ and a finite element space $V_h$ on $\Gamma_h$, our fully discrete surface finite element approximation, evaluated at $t_n := n \Delta t$ for some time step size $\Delta t > 0$ and integer $n$, is based on the sinc quadrature approximation of $A_2^{-\gamma}$ of \cite{2019-bonito}, and a backward Euler approximation in time. Its basis coefficients in the nodal basis at time $t_n$, denoted $\{ \alpha_j^n\}_{j = 1}^{N_h}$, are defined by
\begin{align}\label{eq:scheme-basis-coefficients}
\begin{split}
    &(M_h + \Delta t T_h) \alpha^{n+1} \\
    &\qquad =
    \begin{cases}
        M_h \alpha^n + M_h \Delta t^{1/2} \frac{k \sin(\pi \gamma)}{\pi} \sum_{j = -M}^N e^{(1-\gamma) j k} (e^{j k} M_h + K_h)^{-1} M_h^{1/2} \varrho_h^n, \quad &\gamma \in (0,1) \\
        M_h \alpha^n + M_h \Delta t^{1/2} K_h^{-1} M_h^{1/2} \varrho_h^n, \quad &\gamma = 1,
    \end{cases}
\end{split}
\end{align}
where the matrices $M_h, T_h$ and $K_h$ are the usual finite element matrices, given in \eqref{eq:fem-matrices}, $\varrho_h^n \sim \mathcal{N}(0,I)$ are $N_h$-dimensional Gaussian and independent, while $k, N, M$ are parameters related to the sinc quadrature approximation. Therefore, simulating from our fully discrete approximation only entails assembling the usual finite element matrices and computing a Cholesky factor of the mass matrix, $M_h$. In Figure \ref{fig:motivation} four realizations of $u(1)$ are displayed for different values of $\gamma$ and domains $\Gamma$ (see Example \ref{ex:motivation} for details).

The numerical approximation proposed in this paper is tailored for equations with noise having covariance operator defined by negative fractional powers of elliptic operators. Covariance operators of this form are known as Whittle--Matérn covariance operators, due to their connection to the Matérn covariance function. It may be shown that $(I - \Delta)^{-\gamma} \mathcal{W}$ has Matérn covariance function when $\mathcal{W}$ is white noise on $L^2(\mathbb{R}^d)$, and $\gamma > 0$ is large enough \cite{1954-whittle}. Thus, for each $t$ we may view the noise $A_2^{-\gamma} W (t)$ in \eqref{eq:model-in-intro} as a generalization of a Matérn random field. These random fields are popular models in spatial statistics \cite{2011-rue, 2012-stein}, and their numerical approximations have been studied on both Euclidean space and surfaces (see, e.g., \cite{2018-bolin, 2019-bolin}, \cite{2024-bonito, 2019-herrmann, 2023-lang}, and the references therein). In particular, our approximation of the additive noise is inspired by the surface finite element approximation proposed in \cite{2024-bonito} for this stationary problem.

\begin{figure}
    \centering
    \subcaptionbox{$\gamma = 0.1$}{
        \includegraphics[width = 0.22\textwidth]{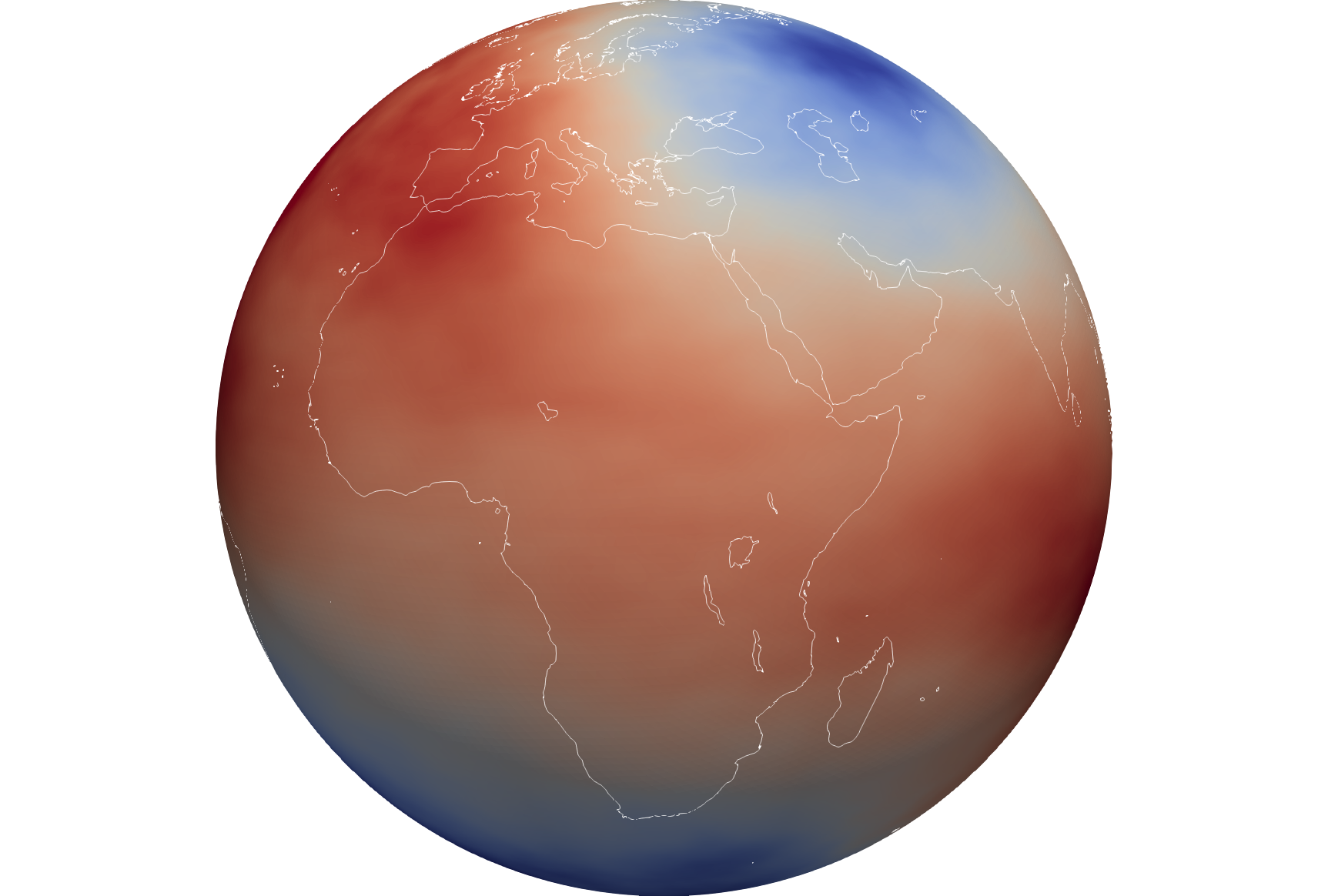}
    }
    \subcaptionbox{$\gamma = 0.9$}{
        \includegraphics[width = 0.22\textwidth]{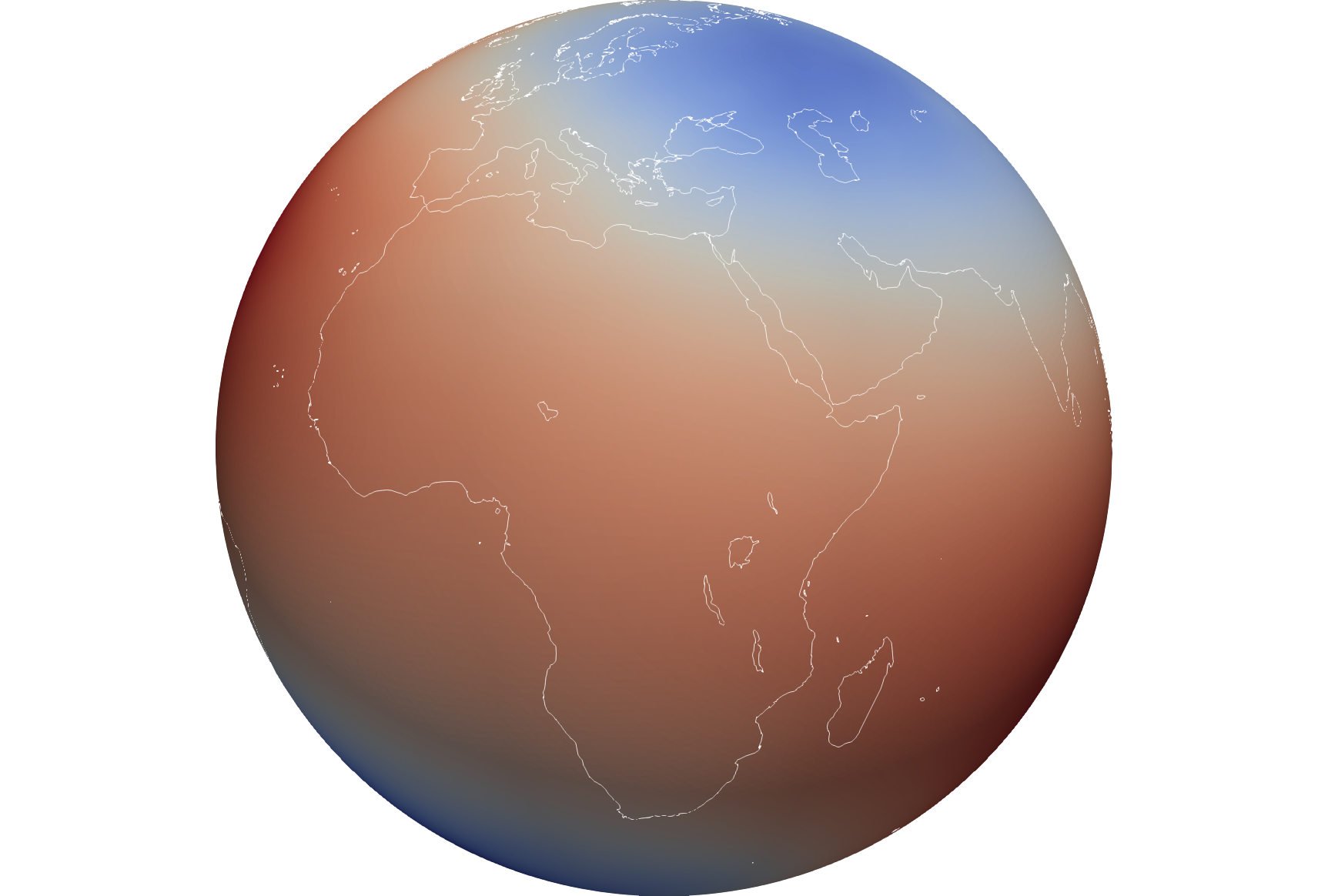}
    }
    \subcaptionbox{$\gamma = 0.1$}{
        \includegraphics[width = 0.22\textwidth]{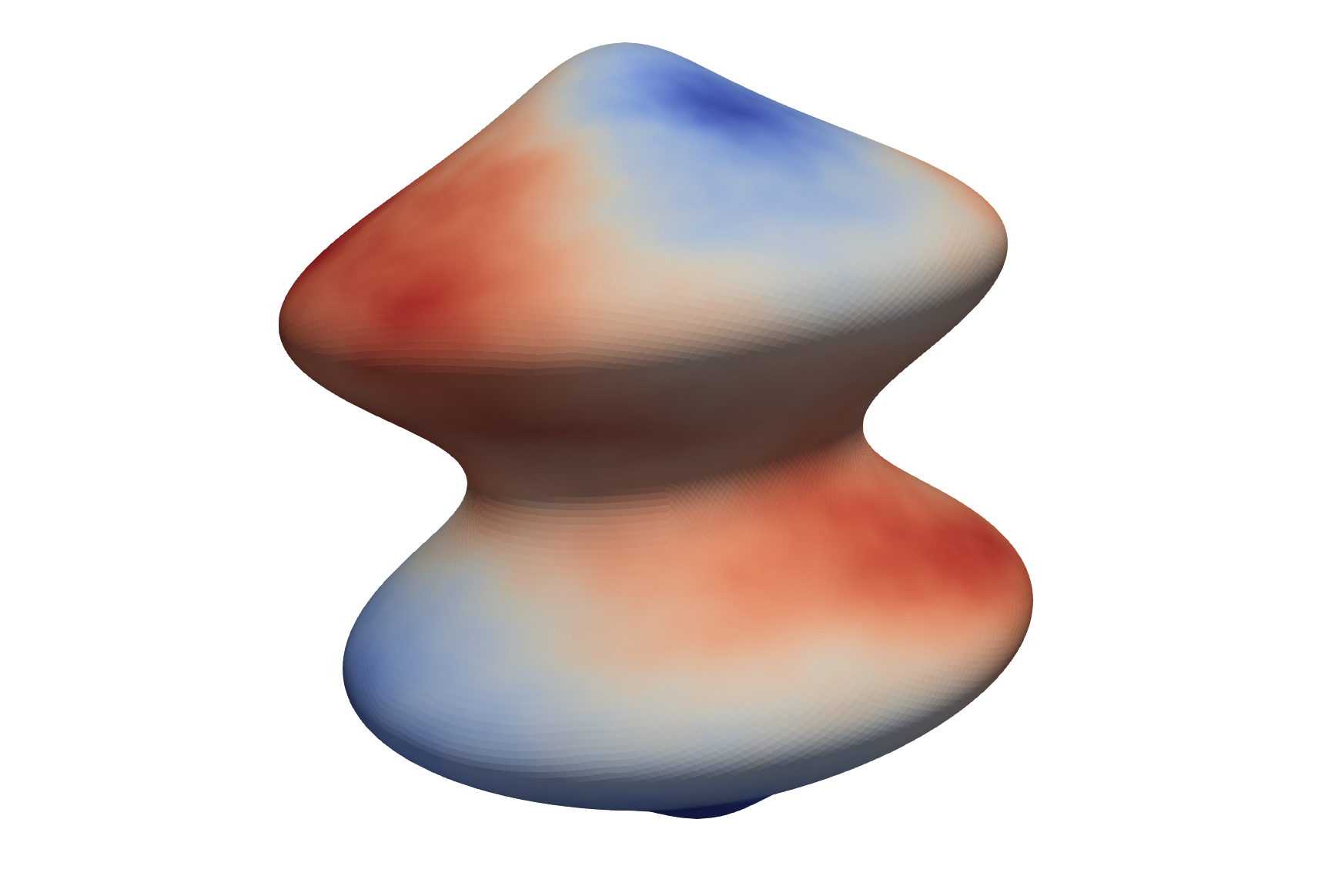}
    }
    \subcaptionbox{$\gamma = 0.9$}{
        \includegraphics[width = 0.22\textwidth]{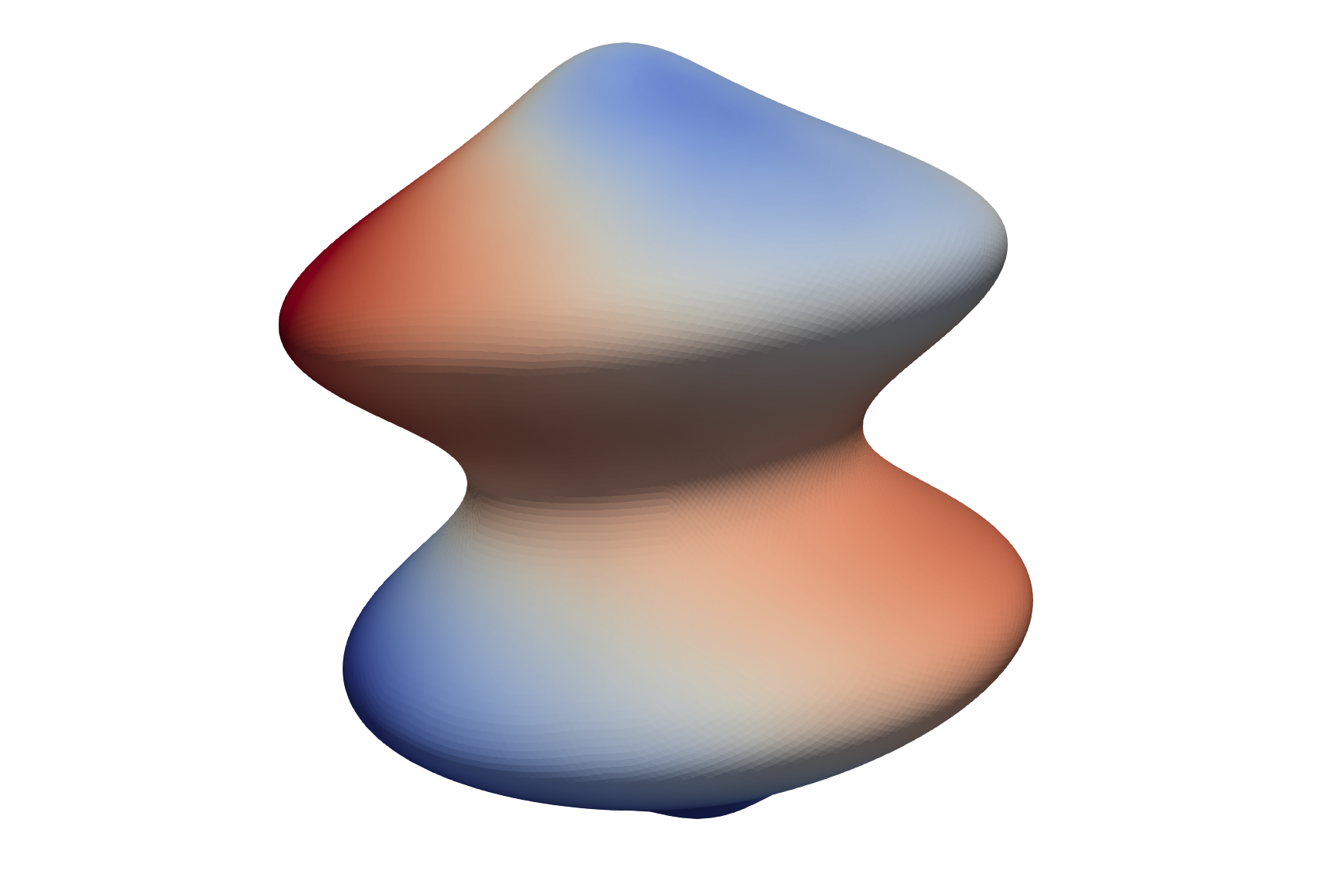}
    }
    \caption{Realizations of \eqref{eq:model-in-intro} for different $\gamma$ and $\Gamma$. The details are given in Example \ref{ex:motivation}. \label{fig:motivation}} 
\end{figure}

The main results of this paper can be summarized as follows:
\begin{enumerate}
    \item we propose a new and computationally efficient surface finite element approximation of \eqref{eq:model-in-intro}, which uses a surface finite element approximation of the additive noise,
    \item we derive strong and pathwise convergence rates for our proposed discretization, which are given in Theorem \ref{theorem:strong-rate} and Corollary \ref{cor:pathwise-rate}, respectively, and finally
    \item we verify the convergence rates obtained by numerical experiments using different values of $\gamma$ in one and two spatial dimensions.
\end{enumerate}
The paper is structured as follows: in Section 2 we state our assumptions on \eqref{eq:model-in-intro} (Assumption \ref{assumption:model}) and its numerical approximation (Assumption \ref{assumption:fem}). We also state a space and time regularity result of the mild solution of \eqref{eq:model-in-intro} (Proposition \ref{proposition:mild-solution-existence}). In Section 3 we outline our surface finite element approximation of \eqref{eq:model-in-intro}. We also state our strong and pathwise convergence results for this approximation (Theorem \ref{theorem:strong-rate} and Corollary \ref{cor:pathwise-rate}). Section 4 contains the proof of Theorem \ref{theorem:strong-rate}, and Section 5 numerical experiments verifying the convergence rate obtained in Theorem \ref{theorem:strong-rate} and Corollary \ref{cor:pathwise-rate}.

\section{Preliminaries and notation}

In what follows, we fix $T > 0$, a filtered probability space $(\Omega, \mathcal{F},\{\mathcal{F}_t\}_{t \in [0,T]}, P)$ (fulfilling the usual conditions), let $H$ be a separable Hilbert space, and $W$ a cylindrical Wiener process on $H$ (covariance operator $I$) adapted to the filtration $\{\mathcal{F}_t\}_{t \in [0,T]}$. Whenever we consider Itô integrals in the following, it will involve this cylindrical Wiener process. For $p \geq 1$ we denote by $L^p(\Omega ; H)$ the Banach space of equivalence classes of measurable functions $(\Omega, \mathcal{F}) \to (H, \mathcal{B}(H))$, with norm
\begin{align*}
    \Vert f \Vert_{L^p(\Omega ; H)}^p := E[\Vert f \Vert_H^p].
\end{align*}
For another separable Hilbert space $U$, we denote by $L(U,H)$ the Banach space of bounded linear operators from $U$ to $H$ with the usual norm, with convention $L(H) := L(H,H)$. We denote by $L_2(U,H)$ the Hilbert space of Hilbert--Schmidt operators from $U$ to $H$, with inner product,
\begin{align*}
    (A,B)_{L_2(U,H)} := \sum_{j = 1}^{\infty}(A e_j, B e_j)_H,
\end{align*}
for any orthonormal basis $\{ e_j \}_{j = 1}^{\infty}$ of $U$, with convention $L_2(H) := L_2(H,H)$.

Throughout the paper, we will denote by $C$ a generic constant, which may change from line to line. Parameter dependence of $C$ will be denoted by subscripts, but is omitted unless relevant. We will also frequently and without further explanation (see, e.g., \ref{cond:A1A2} and \ref{cond:solution-operator}) consider $A \in L(H)$ with the property that $A B \in L(H)$ for some densely defined and possibly unbounded linear $B : D(B) \subseteq H \to H$. By this we understand that $A B$ extends from $D(B)$ to $H$ as a bounded linear operator, and this extension is denoted $AB$.

Finally, we recall some properties of generators of variational semigroups. Let $V$ be another Hilbert space with $V \subseteq H$ densely and continuously, and recall that a sesquilinear form $a : V \times V \to \mathbb{C}$ is continuous with constant $C > 0$ if
\begin{align*}
    \vert a(u, v) \vert \leq C \Vert u \Vert_V \Vert v \Vert_V,
\end{align*}
and coercive with constant $c > 0$ if
\begin{align*}
    \mathrm{Re}(a(u,u)) \geq c \Vert u \Vert_V^2.
\end{align*}
Suppose $a : V \times V \to \mathbb{C}$ is a continuous sesquilinear form, with the property that $\lambda (\cdot, \cdot)_H + a(\cdot, \cdot)$, is coercive for $\lambda \geq 0$ large enough. The operator, $-A : D(A) \to H$, $D(A) \subseteq V$ defined by $(A u, v)_H = a(u, v)$ for any $u \in D(A)$ and $v \in V$ is called the generator of a variational semigroup on $H$, and is a subclass of generators of analytic semigroups. This semigroup is defined by the Dunford integral,
\begin{align*}
    S(t) := \frac{1}{2\pi i} \int_{-\lambda + \gamma} e^{-zt} (z - A)^{-1} \, dz,
\end{align*}
where $\gamma := \{ s e^{\pm i \delta}, \ s \geq 0 \}$, oriented counterclockwise, for some $\delta \in (0, \pi / 2)$ large enough. We can define fractional powers of $\lambda + A$ by, 
\begin{align}\label{eq:fractional-powers}
\begin{split}
    (\lambda + A)^{-\alpha} &:= \frac{1}{\Gamma(\alpha)} \int_0^{\infty} t^{-1 + \alpha} e^{-\lambda t} S(t) \, dt, \\
    (\lambda + A)^{\alpha} &:= \frac{\sin(\alpha \pi)}{\pi} \int_0^{\infty} t^{-1 + \alpha} (t + \lambda + A)^{-1} (\lambda + A) \, dt, 
\end{split}
\end{align}
for $\alpha \in (0,1)$, where $\Gamma(\alpha)$ is the gamma function evaluated at $\alpha$. See, e.g., (6.9) and Theorem 6.9 of Chapter 2 in \cite{pazy} (the expression for $(\lambda + A)^{-\alpha}$ also holds for any $\alpha \geq 1$). The following properties of generators of analytic semigroups will be used frequently. 
\begin{lemma}\label{lemma:analytic-semigroup}
    Let $\lambda$, $A$ be as above, and $S(\cdot)$ the analytic semigroup generated by $-A$. Then,
    \begin{enumerate}
        \item[(a)] for any $\alpha \geq 0$ there are $C_{\alpha}, \delta > 0$ such that for any $t > 0$ and $u \in H$
        \begin{align*}
            S(t) u \in D((\lambda + A)^{\alpha}) \quad \text{with} \quad \Vert (\lambda + A)^{\alpha} S(t) u \Vert_H \leq C_{\alpha} e^{(\lambda-\delta)t} t^{-\alpha} \Vert u \Vert_H,
        \end{align*}
        with convention $(\lambda + A)^0 = I$, \medskip

        \item[(b)] for $\alpha, \beta \in \mathbb{R}$, $u \in D((\lambda + A)^{\gamma})$ with $\gamma = \max(\alpha, \beta, \alpha + \beta)$,
        \begin{align*}
            (\lambda + A)^{\alpha} (\lambda + A)^{\beta} u = (\lambda + A)^{\beta} (\lambda + A)^{\alpha} u = (\lambda + A)^{\alpha + \beta} u,
        \end{align*}
        
        \item[(c)] for $\alpha \in \mathbb{R}$, $D((\lambda + A)^{\alpha})$ is dense in $H$, \medskip
        
        \item[(d)] for $\alpha \in \mathbb{R}$, $(\lambda + A)^{\alpha} S(t) u = S(t) (\lambda + A)^{\alpha} u$ for $u \in D((\lambda + A)^{\alpha})$, \medskip
        
        \item[(e)] and finally for $\alpha \in [0,1]$,
        \begin{align*}
            \Vert (S(t) - I) u \Vert_H &\leq C e^{\lambda t} t^{\alpha} \Vert (\lambda + A)^{\alpha} u \Vert_H.
        \end{align*}
    \end{enumerate}
\end{lemma}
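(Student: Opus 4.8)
The plan is to reduce all five assertions to standard facts about a single invertible sectorial operator by shifting. Set $B := \lambda + A$. Continuity of $a$ and coercivity of $\lambda(\cdot,\cdot)_H + a(\cdot,\cdot)$ place the numerical range of $B$ in a sector $\{z \in \mathbb{C} : |\arg z| \le \theta,\ \mathrm{Re}\,z \ge \delta\}$ with $\theta < \pi/2$ and some $\delta > 0$; hence $B$ is sectorial and boundedly invertible, $-B$ generates a bounded analytic semigroup, the Dunford contour may be taken to the right of the imaginary axis, and one has $S(t) = e^{\lambda t}e^{-tB}$ together with $\|e^{-tB}\|_{L(H)} \le C e^{-\delta t}$ for all $t \ge 0$. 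I would record this reduction first, since afterwards (a)--(e) are all statements about the fractional powers $B^\beta$, $\beta \in \mathbb{R}$, of $B$, and $(\lambda+A)^\alpha S(t) = e^{\lambda t} B^\alpha e^{-tB}$.

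For (a) with $\alpha \in (0,1)$ I would estimate $B^\alpha e^{-tB}$ from the representation $B^\alpha e^{-tB} = (2\pi i)^{-1}\int_\Gamma z^\alpha e^{-zt}(z-B)^{-1}\,dz$, deforming $\Gamma$ to a rescaled sectorial contour and using $\|(z-B)^{-1}\|_{L(H)} \le C/|z|$ to get $\|B^\alpha e^{-tB}\|_{L(H)} \le C_\alpha t^{-\alpha}$; combining this for $t \le 1$ with the factorization $B^\alpha e^{-tB} = (B^\alpha e^{-B})e^{-(t-1)B}$ and the decay $\|e^{-(t-1)B}\| \le Ce^{-\delta(t-1)}$ for $t \ge 1$ yields $\|B^\alpha e^{-tB}\| \le C_\alpha e^{-\delta t} t^{-\alpha}$, and the prefactor $e^{\lambda t}$ then produces the stated $C_\alpha e^{(\lambda-\delta)t} t^{-\alpha}$. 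For general $\alpha \ge 0$ I would write $\alpha = n + \sigma$ with $\sigma \in [0,1)$, split $e^{-tB} = \big(e^{-tB/(n+1)}\big)^{n+1}$, and distribute one $\sigma$-power and $n$ integer powers of $B$ across the factors, each contributing a factor $t^{-\sigma}$ or $t^{-1}$.

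Claims (b), (c), (d) are the additivity, density and commutation properties of fractional powers of an invertible sectorial operator, which I would mostly cite from \cite{pazy}. For (b) the additivity law $B^\alpha B^\beta = B^{\alpha+\beta}$ on $D(B^{\max(\alpha,\beta,\alpha+\beta)})$ reduces the mixed-sign cases to the positive one via $B^{-\beta} = (B^\beta)^{-1}$; for (c) I would note $D(B^\beta) = H$ for $\beta \le 0$ (as $B^{-|\beta|}$ is bounded) while for $\beta > 0$ the scale $D(B^\beta)$ contains the dense set $\bigcap_n D(B^n)$ of smooth vectors; and (d) follows because $e^{-tB}$ commutes with each resolvent $(z-B)^{-1}$ and hence with the contour integral defining $B^\beta$.

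Finally (e), which I expect to be the most delicate point, follows from (a) via the fundamental theorem of calculus. For $u \in D(B)$ and $\alpha \in (0,1]$ I would write $(S(t)-I)u = -\int_0^t A\,S(s)u\,ds$ and, using $A = B - \lambda$ and $S(s) = e^{\lambda s}e^{-sB}$, split the integrand as $e^{\lambda s}\big(B^{1-\alpha}e^{-sB}B^\alpha u - \lambda\, e^{-sB}u\big)$; bounding the two pieces by (a) with exponents $1-\alpha$ and $0$ and using $\|u\| \le \|B^{-\alpha}\|\,\|B^\alpha u\|$ controls the integrand by $C e^{(\lambda-\delta)s}(s^{\alpha-1}+1)\|B^\alpha u\|$, after which an extension to $u \in D(B^\alpha)$ by the density from (c) gives the result. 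The genuine work is the bookkeeping: comparing $\int_0^t e^{(\lambda-\delta)s}(s^{\alpha-1}+1)\,ds$ with $e^{\lambda t} t^\alpha$ separately on $t \le 1$ and $t \ge 1$ absorbs the competing exponentials $e^{\lambda t}$ and $e^{-\delta t}$ into the stated constant, and this same extraction of a strict gap $\delta > 0$ from coercivity in the reduction step is where attention is needed, everything else being assembled from classical results.
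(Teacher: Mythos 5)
Your proposal is correct and follows essentially the same route as the paper: the paper disposes of (a)--(d) by citing Pazy (Chapter 2, Theorems 6.8 and 6.13), applied to the shifted operator $\lambda + A$ with the coercivity gap $\delta$, and of (e) by invoking the standard $\lambda = 0$ argument from \cite{2024-auestad}. Your reduction $S(t) = e^{\lambda t}e^{-t(\lambda + A)}$, the contour-integral bound for $(\lambda+A)^{\alpha}e^{-t(\lambda+A)}$, and the fundamental-theorem-of-calculus argument for (e) (with the $t\le 1$ / $t\ge 1$ bookkeeping) are precisely the content behind those citations, so the two proofs coincide in substance, with yours merely self-contained where the paper defers to references.
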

\begin{proof}
    The first four statements can be found in e.g. Chapter 2 (Theorem 6.8 and 6.13) in \cite{pazy}. The last one follows by similar arguments as in the case of $\lambda = 0$ (see, e.g., \cite{2024-auestad}). 
\end{proof}

In what follows, we let $\Gamma \subseteq \mathbb{R}^{d+1}$, $d \leq 3$ be a compact $C^3$-surface, by which we understand:
\begin{enumerate}
    \item $\Gamma \subseteq \mathbb{R}^{d+1}$ is a connected, bounded and boundaryless hypersurface, \medskip
    \item it has unit normal $\nu : \Gamma \to \mathbb{R}^{d+1}$, associated tubular neighborhood $\mathcal{N}_{\epsilon} := \{ x = x_0 + \alpha \nu, \ x_0 \in \Gamma, \ \vert \alpha \vert < \epsilon \}$, and \medskip
    \item a signed distance function $d_s \in C^3(\mathcal{N}_{\epsilon})$, satisfying $\vert d_s(x) \vert := \inf \{ \vert x - x_0 \vert, \ x_0 \in \Gamma \}$.
\end{enumerate}
For a given $\tilde{f} \in C^1(\mathcal{N}_{\epsilon})$ with $f := \tilde{f}\vert_{\Gamma}$ (the restriction of $\tilde{f}$ to $\Gamma$), we define the tangential derivative $\nabla_{\Gamma} f(x) := (I - \nu(x) \nu(x)^T) \nabla \tilde{f}(x)$ for any $x \in \Gamma$. The tangential derivative at $x \in \Gamma$ lies in the tangent plane, denoted $T_x \Gamma$, which is the subspace of $\mathbb{R}^{d+1}$ orthogonal to $\nu(x)$. Denoting by $a_j(x) \in \mathbb{R}^{d+1}$ the $j$'th row of the matrix $I - \nu(x) \nu(x)^T$, the $j$'th component of the tangential derivative may be expressed $\underline{\partial}_j f := a_j(x) \cdot \nabla \tilde{f}$, $x \in \Gamma$, while higher order derivatives may be expressed $\underline{\partial}_i \underline{\partial}_j f := (a_i(x) \cdot \nabla) (a_j(x) \cdot \nabla) \tilde{f}$, provided $\tilde{f}$ and $\nu$ are smooth enough. 

Define $H := L^2(\Gamma)$ and let $V$ be some closed subspace of $H^1(\Gamma)$, where $H^s(\Gamma)$, $s \in \mathbb{N}$, is defined as the completion of $C^s(\mathcal{N}_{\epsilon})$ restricted to $\Gamma$, using the norm given by the inner product, $(u, v)_{H^s} := \sum_{\vert \alpha \vert \leq s}(\underline{\partial}^{\alpha} u, \underline{\partial}^{\alpha} v)_{L^2}$ where $\underline{\partial}^{\alpha} u$ denotes any tangential derivatives of order $\vert \alpha \vert$. Since the components of tangential derivative does not necessarily commute, $\alpha$ is not a multi-index, but we abuse this notation for readability. 

\subsection*{Model and mild solution}

The operators $A_j : D(A_j) \to H$ in \eqref{eq:model-in-intro} are related to the sesquilinear forms $a_j : V \times V \to \mathbb{C}$, where, 
\begin{align*}
    a_j(u, v) := \int_{\Gamma} \mathcal{A}_j \nabla_{\Gamma} u \cdot \nabla_{\Gamma} \overline{v} + (b_j \cdot \nabla_{\Gamma} u) \overline{v} + \alpha_j u \overline{v} \, d \sigma, \quad j = 1, 2,
\end{align*} 
with coefficients $\mathcal{A}_j(x) \in L(T_x \Gamma)$, $b_j(x) \in T_x \Gamma$, $\alpha_j(x) \in \mathbb{R}$, where 
$\overline{v}$ is the complex conjugate of $v$, and $\sigma$ is surface measure on $\Gamma$. We make the following assumptions on \eqref{eq:model-in-intro}.
\begin{assumption}\label{assumption:model}
\hfill
\begin{description}
    \myitem{(M1)}\label{cond:continuity-coercivity} There is $\lambda \geq 0$ such that the shifted sesquilinear form $\lambda (\cdot,\cdot)_H + a_1(\cdot, \cdot)$ and the sesquilinear form $a_2(\cdot, \cdot)$ are coercive and continuous on $V$. \medskip
    \myitem{(M2)}\label{cond:hilbert-schmidt} For any $\alpha < -d/4$, there is $C_{\alpha} > 0$, such that $\Vert A_2^{\alpha} \Vert_{L_2(H)} \leq C_{\alpha}$. \medskip
    \myitem{(M3)}\label{cond:A1A2} For some $C > 0$, $\Vert (\lambda + A_1) A_2^{-1} \Vert_{L(H)} \leq C$ and $\Vert (\lambda + A_1)^{-1/2} A_2^{1/2} \Vert_{L(H)} \leq C$.
    \medskip
    \myitem{(M4)}\label{cond:gamma-size} $\gamma > d/4 - 1/2$. \medskip
    \myitem{(M5)} $\xi$ is $\mathcal{F}_0$-measurable and in $L^p(\Omega ; H)$ for some $p \geq 2$. \medskip
\end{description}
\end{assumption}

\begin{remark}\label{remark:model-assumptions} \hfill
\begin{enumerate}[(a)]
    \item
    Sufficient conditions for \ref{cond:continuity-coercivity} to hold are,
    \begin{enumerate}[(i)]
    \item $\Vert \mathcal{A}_j \Vert_{L(T_x \Gamma)}$, $\Vert b_j \Vert_{\mathbb{R}^{d+1}}$ and $\vert \alpha_j \vert$ are in $L^{\infty}(\Gamma)$, $j = 1, 2$, and 
    \item there is $c > 0$, such that for any $y \in T_x \Gamma$, and $y_0 \in \mathbb{R}$, 
    \begin{align}
    \label{eq:elliptic-condition}
        &y^T \mathcal{A}_1(x) y \geq c \vert y \vert^2, \qquad &&\text{for a.e. } x \in \Gamma, \\
    \label{eq:coercive-condition}
        &y^T \mathcal{A}_2(x) y + b_2(x) \cdot y y_0 + \alpha_2(x) y_0^2 \geq c (\vert y \vert^2 + y_0^2), \qquad &&\text{for a.e. } x \in \Gamma.
    \end{align}
    \end{enumerate}
    In which case one sees that 
    \begin{align*}
        \vert a_j(u, v) \vert \leq C \Vert u \Vert_{H^1} \Vert v \Vert_{H^1}, \quad \mathrm{Re}(a_1(u, u)) + \lambda \Vert u \Vert_{L^2}^2 \geq \frac{c}{2} \Vert u \Vert_{H^1}^2, \quad \mathrm{Re}(a_2(u, u)) \geq c \Vert u \Vert_{H^1}^2,
    \end{align*}
    with $C = 4(\Vert \mathcal{A}_j \Vert_{L^{\infty}} + \Vert b_j \Vert_{L^{\infty}} + \Vert \alpha_j \Vert_{L^{\infty}})$ (with $\Vert \mathcal{A}_j \Vert_{L^{\infty}} := \mathrm{ess \ sup}_{x\in \Gamma}\Vert \mathcal{A}_j(x) \Vert_{L(T_x \Gamma)}$, and similarly for the other terms), provided $\lambda \geq \Vert \alpha_1 \Vert_{L^{\infty}} + \frac{1}{2c} \Vert b_1 \Vert_{L^{\infty}} + \frac{c}{2}$. If $V$ is e.g. the subspace of $H^1(\Gamma)$ which integrates to $0$, \eqref{eq:coercive-condition} can be replaced by the weaker condition $y^T \mathcal{A}_2(x) y + b_2(x) \cdot y y_0 + \alpha_2(x) y_0^2 \geq c \vert y \vert^2$ for a.e. $x \in \Gamma$, since we then have access to a Poincaré inequality (see Theorem 2.12 in \cite{2013-elliott}).

    \item
    By an explicit computation using the spherical harmonics and their eigenvalues, it may be seen that condition \ref{cond:hilbert-schmidt} holds whenever $\Gamma$ is the $d$-dimensional sphere and $A_2 = I - \Delta_{\Gamma}$. More generally, it may be seen that this condition holds for this choice of $A_2$ whenever $\Gamma$ is a compact smooth surface (see Theorem 7.6.4 in \cite{lablee}).
    
    \item
    The first inequality of \ref{cond:A1A2} holds if $D(\lambda + A_1) = D(A_2)$, and $A_2$ satisfies the elliptic regularity estimate $\Vert A_2^{-1} f \Vert_{H^2} \leq C \Vert f \Vert_{L^2}$, $f \in L^2$. This is e.g. the case when $A_1 = -\Delta_{\Gamma}$ and $A_2 = I + A_1$. The second inequality holds if $a_1$ and $a_2$ are symmetric and defined on the same subspaces $V$. Due to \ref{cond:continuity-coercivity} the norms $\Vert (\lambda + A_1)^{1/2} \cdot \Vert_{L^2}, \Vert A_2^{1/2} \cdot \Vert_{L^2}$ on $V$ are equivalent (both are equivalent to the $H^1$-norm), and a standard computation yields the inequality in this case. 
\end{enumerate}
\end{remark}

Under Assumption \ref{assumption:model}, $-A_j : D(A_j) \to H$ are generators of analytic semigroups on $H$, denoted $S_j(\cdot)$, $j = 1, 2$. We are interested in the mild solution of \eqref{eq:model-in-intro}, which is defined by the stochastic convolution
\begin{align}\label{eq:mild-form}
    u(t) = S_1(t) \xi + \int_0^{t} S_1(t - s) A_2^{-\gamma} dW, \quad P\text{-a.s. for any } t \in [0,T].
\end{align}
The next proposition asserts that we have a mild solution under Assumption \ref{assumption:model}, and describes the space and time regularity of this solution. The proof is identical to that of Lemma 2.4, 2.5 and 2.6 in \cite{2025-auestad}.
\begin{proposition}\label{proposition:mild-solution-existence}
    Under Assumption \ref{assumption:model}, \eqref{eq:model-in-intro} has a mild solution, $u$, with $u(t) \in L^p(\Omega ; H)$, for any $t \geq 0$. Further,
    \begin{enumerate}[(i)]
        \item for $\alpha \in [0, \gamma + 1/2 - d/4) \cap [0,3/2)$, and $\rho \leq \alpha$, there are $C_{p,\alpha}, \delta > 0$ such that,
        \begin{align*}
            \Vert (\lambda + A_1)^{\alpha} u(t) \Vert_{L^p(\Omega ; H)} \leq C_{p,\alpha} e^{\lambda t} (1 + e^{-\delta t} t^{-\alpha + \rho} \Vert (\lambda + A_1)^{\rho} \xi \Vert_{L^p(\Omega ; H)}),
        \end{align*}
        \item while for $0 \leq t_1 \leq t_2$, $\alpha \in [0,\gamma + 1/2 - d / 4) \cap [0,1/2]$ and $\rho \leq \alpha$, there are $C_{p,\alpha}, \delta > 0$ such that, 
        \begin{align*}
            \Vert u(t_2) - u(t_1) \Vert_{L^p(\Omega ; H)} \leq C_{p,\alpha} e^{\lambda t_2} (t_2-t_1)^{\alpha} (1 + e^{-\delta t_1} t_1^{-\alpha + \rho} \Vert (\lambda +  A_1)^{\rho} \xi \Vert_{L^p(\Omega ; H)}).
        \end{align*}
        \end{enumerate}
\end{proposition}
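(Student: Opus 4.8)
The plan is to treat the two terms of the mild solution \eqref{eq:mild-form} separately and to reduce every estimate to a single Hilbert--Schmidt bound for $(\lambda + A_1)^a S_1(\tau) A_2^{-\gamma}$, established first. Existence of the mild solution amounts to checking that $s \mapsto S_1(t-s)A_2^{-\gamma}$ is stochastically integrable, i.e. lies in $L^2([0,t];L_2(H))$, which is exactly the $a=0$ instance of that bound. Since the integrand of the stochastic convolution is deterministic (operator-valued), the convolution is a centred Gaussian random variable in $H$; by the Burkholder--Davis--Gundy inequality (equivalently Gaussian moment equivalence) its $L^p(\Omega;H)$ norm is controlled by its $L^2(\Omega;H)$ norm up to a constant $C_{p}$, so it suffices to work in $L^2(\Omega;H)$ via the It\^o isometry, while the deterministic term is handled pathwise before taking $L^p(\Omega)$ norms.

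For the key estimate I would use the factorization
\begin{align*}
    (\lambda + A_1)^a S_1(\tau) A_2^{-\gamma} = \bigl[(\lambda+A_1)^{a-\beta}S_1(\tau)\bigr]\bigl[(\lambda+A_1)^{\beta}A_2^{-\beta}\bigr]\,A_2^{-(\gamma - \beta)}, \qquad \tau > 0.
\end{align*}
The first factor is bounded by $C e^{(\lambda-\delta)\tau}\tau^{-(a-\beta)}$ via Lemma \ref{lemma:analytic-semigroup}(a) and (d) when $\beta \le a$; the middle factor is bounded in $L(H)$ for $\beta \in [-1/2,1]$ by interpolating the two estimates in \ref{cond:A1A2} (which give the cases $\beta = 1$ and $\beta = -1/2$) with the trivial case $\beta = 0$; and the last factor is Hilbert--Schmidt by \ref{cond:hilbert-schmidt} provided $\gamma - \beta > d/4$, i.e. $\beta < \gamma - d/4$. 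Square-integrability of $\tau \mapsto \Vert(\lambda+A_1)^a S_1(\tau)A_2^{-\gamma}\Vert_{L_2(H)}^2$ near $0$ needs $a - \beta < 1/2$, so a feasible $\beta$ exists in $(\max(a - 1/2, -1/2),\, \min(\gamma - d/4, 1)]$ exactly when $a < \gamma + 1/2 - d/4$ and $a < 3/2$; the lower endpoint $-1/2$ together with \ref{cond:gamma-size} is what guarantees the Hilbert--Schmidt factor exists for small $\gamma$.

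For the space regularity (i) I apply $(\lambda+A_1)^\alpha$ to \eqref{eq:mild-form}. The deterministic term equals $(\lambda+A_1)^{\alpha-\rho}S_1(t)(\lambda+A_1)^{\rho}\xi$, bounded in $L^p(\Omega;H)$ by $C e^{(\lambda-\delta)t}t^{-(\alpha-\rho)}\Vert(\lambda+A_1)^\rho\xi\Vert_{L^p(\Omega;H)}$ using Lemma \ref{lemma:analytic-semigroup}(a),(d); the stochastic term gives, by the It\^o isometry, $\int_0^t \Vert(\lambda+A_1)^\alpha S_1(t-s)A_2^{-\gamma}\Vert_{L_2(H)}^2\,ds$, which is finite by the key estimate with $a = \alpha$ whenever $\alpha < \gamma + 1/2 - d/4$ and $\alpha < 3/2$. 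For the time regularity (ii) I split $u(t_2)-u(t_1)$ into the deterministic difference $(S_1(t_2-t_1)-I)S_1(t_1)\xi$, the old-noise term $\int_0^{t_1}(S_1(t_2-t_1)-I)S_1(t_1-s)A_2^{-\gamma}\,dW$, and the new-noise term $\int_{t_1}^{t_2}S_1(t_2-s)A_2^{-\gamma}\,dW$. The first two extract a factor $(t_2-t_1)^\alpha$ via Lemma \ref{lemma:analytic-semigroup}(e) and then reduce, as in (i), to $\Vert(\lambda+A_1)^\alpha S_1(t_1)\xi\Vert$ and to $\int_0^{t_1}\Vert(\lambda+A_1)^\alpha S_1(t_1-s)A_2^{-\gamma}\Vert_{L_2(H)}^2\,ds$, both already controlled.

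The new-noise term is bounded by $\int_0^{t_2-t_1}\Vert S_1(\tau)A_2^{-\gamma}\Vert_{L_2(H)}^2\,d\tau$; applying the key estimate with $a=0$ and $\beta = \alpha-1/2 \in [-1/2,0]$ bounds the integrand by $C\tau^{-2(1/2-\alpha)}$, whose integral is $\le C(t_2-t_1)^{2\alpha}$. Because $S_1(\tau)$ does not decay as $\tau\to 0$ the best attainable singularity exponent here is $1/2-\alpha \ge 0$, which forces $\alpha \le 1/2$ and is the origin of that restriction in (ii). The genuinely delicate point throughout is the boundedness of the mixed fractional powers $(\lambda+A_1)^\beta A_2^{-\beta}$ for non-integer $\beta\in[-1/2,1]$: since $A_1,A_2$ need not be selfadjoint, interpolating the two endpoint estimates in \ref{cond:A1A2} to fractional $\beta$ requires, e.g., bounded imaginary powers of $\lambda+A_1$ and $A_2$ together with the Heinz--Kato inequality (or complex interpolation of domains). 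This is precisely the argument carried out in Lemmas~2.4--2.6 of \cite{2025-auestad}, which I would follow; the remaining work (choice of $\beta$, tracking $\delta$ in the exponentials, and the Gaussian moment equivalence) is routine.
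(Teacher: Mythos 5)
Your proposal is correct and follows essentially the same route as the paper, whose proof simply defers to Lemma 2.4, 2.5 and 2.6 in \cite{2025-auestad}: the factorization of $(\lambda+A_1)^{a}S_1(\tau)A_2^{-\gamma}$ using semigroup smoothing, interpolation of the two bounds in \ref{cond:A1A2}, the Hilbert--Schmidt bound from \ref{cond:hilbert-schmidt}, Gaussian moment equivalence, and the three-term splitting of $u(t_2)-u(t_1)$ are exactly the ingredients of that argument, and your feasibility analysis for $\beta$ correctly accounts for both restrictions $\alpha<\gamma+1/2-d/4$ and $\alpha<3/2$. The only blemish is the endpoint $\alpha=0$ in (ii), where your choice $\beta=\alpha-1/2=-1/2$ makes the new-noise integrand behave like $\tau^{-1}$ (log-divergent); this is harmless, since one can instead take $\beta\in(-1/2,0)$, admissible because $\gamma-d/4>-1/2$ by \ref{cond:gamma-size}, or simply note that (ii) for $\alpha=0$ follows from (i) and the triangle inequality.
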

As a consequence of Proposition \ref{proposition:mild-solution-existence} $(i)$ above, $u(t) \in D((\lambda + A_1)^{\alpha}), \, t > 0$, $P$-a.s. for any $\alpha \in [0, \gamma + 1/2 - d/4) \cap [0,3/2)$. By Proposition \ref{proposition:mild-solution-existence} $(ii)$ one can deduce that $u$ has $\beta$-Hölder continuous trajectories on any interval $[t_1, t_2]$, $0 < t_1 < t_2 < \infty$ $P$-a.s. for any $\beta \in [0,\gamma + 1/2 - d/4) \cap [0,1/2)$. To see this, note that the Hölder regularity follows when $\xi = 0$ by the Kolmogorov continuity test (see, e.g., Theorem 3.3 in \cite{da-prato}). Since $S(\cdot) \xi$ is Lipchitz on the interval $[t_1, t_2]$ $P$-a.s. (by Lemma \ref{lemma:analytic-semigroup}), the $\beta$-Hölder regularity of the sum follows. 

\subsection*{Surface finite element approximation}

In order to approximate $A_j$, we use the surface finite element approximation introduced in \cite{1988-dziuk}. To that end, let $\Gamma_h \subset \mathcal{N}_{\epsilon}$ be a discrete surface, consisting of the union of simplices $\tau \in \mathcal{T}_h$ with disjoint interior, maximum diameter $h$, and vertices on $\Gamma$. The simplices $\mathcal{T}_h$ are regular in the sense that for $C > 0$,
\begin{align}\label{eq:quasi-uniform}
    C^{-1} h \leq 2 \rho(\tau) \leq \text{diam}(\tau) \leq 2 r(\tau) \leq C h, \quad \tau \in \mathcal{T}_h,
\end{align}
where $\rho, r$ are the radii of the incircle and circumcircle, respectively. For any point $x \in \Gamma_h$, we can associate a unique point $p(x) \in \Gamma$ by the mapping $p(x) := x - d(x) \nabla d(x)$. For any $f : \Gamma_h \to \mathbb{R}$, we define the lift $\iota_h f := f \circ p\vert_{\Gamma_h}^{-1} : \Gamma \to \mathbb{R}$. Moreover, we define the quotient of surface area measure, $\delta_h \in L^1(\Gamma_h)$, by $\int_U \delta_h \, d \sigma_h = \int_{p(U)} \, d\sigma$ for any $U \subseteq \Gamma_h$ measurable, where $\sigma_h$ is surface measure on $\Gamma_h$. 

We let $V_h \subseteq C(\Gamma_h)$ be the $N_h$-dimensional linear space consisting of continuous functions on $\Gamma_h$ that are first order polynomials when restricted to any $\tau \in \mathcal{T}_h$. For $u, v \in V_h$, we define the discrete sesquilinear forms,
\begin{align*}
    a_{j,h}(u, v) := \int_{\Gamma_h} \mathcal{A}_{j,h} \nabla_{\Gamma_h} u \cdot \nabla_{\Gamma_h} \overline{v} + (b_{j,h} \cdot \nabla_{\Gamma_h} u) \overline{v} + \alpha_{j,h} u \overline{v} \, d \sigma_h, \quad j = 1, 2,
\end{align*}
with coefficients $\mathcal{A}_{j,h}(x) \in L(T_x \Gamma_h)$, $b_{j,h}(x) \in T_x \Gamma_h$, $\alpha_{j,h}(x) \in \mathbb{R}$ defined on $\Gamma_h$, where the tangential derivatives on $\Gamma_h$ are defined similarly as those on $\Gamma$ (but only in an almost everywhere sense). The precise conditions on the sesquilinear forms and their coefficients are listed in Assumption \ref{assumption:fem}.

It will be convenient to define $H^1(\Gamma_h)$ as the completion of $C^1(\mathcal{N}_{\epsilon})$ restricted to $\Gamma_h$ using the norm given by the inner product
\begin{align*}
    (u, v)_{H^1(\Gamma_h)} := \int_{\Gamma_h} u \overline{v} + \nabla_{\Gamma_h} u \cdot \nabla_{\Gamma_h} \overline{v} \, d\sigma_h.
\end{align*}
The following lemma will be useful.
\begin{lemma}\label{lemma:sfem}
\begin{enumerate}[(a)] 
    With $\Gamma_h$ as above, \medskip
    \item there is $C > 0$ such that for any $u \in V_h$, $\Vert u \Vert_{H^1(\Gamma_h)} \leq C h^{-1} \Vert u \Vert_{L^2(\Gamma_h)}$, \medskip
    \item for $C > 0$, $\Vert 1-\delta_h \Vert_{L^{\infty}(\Gamma_h)} \leq C h^2$, and finally, \medskip
    \item the norms,
    \begin{align*}
        \Vert \cdot \Vert_{H^1(\Gamma)}, \quad \Vert \iota_h^{-1} \cdot \Vert_{H^1(\Gamma_h)},
    \end{align*}
    are both equivalent on $H^1(\Gamma)$, with constants independent of $h$. 
\end{enumerate}
\end{lemma}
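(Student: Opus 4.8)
The plan is to treat the three statements in turn, with (a) and (b) following from classical finite element geometry and (c) building on both.

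For (a), I would argue element by element. On each (flat) simplex $\tau \in \mathcal{T}_h$ I pass to a fixed reference simplex through the affine map of diameter comparable to $h$; since $V_h$ restricts to affine functions on $\tau$, all norms on this finite-dimensional space are equivalent with $\tau$-independent constants. Tracking the scaling of the affine map, the tangential gradient picks up a factor $h^{-1}$ while the surface measure scales like $h^d$, giving $\Vert \nabla_{\Gamma_h} u \Vert_{L^2(\tau)} \leq C h^{-1} \Vert u \Vert_{L^2(\tau)}$; the regularity condition \eqref{eq:quasi-uniform} is exactly what makes $C$ uniform over $\tau$. Squaring and summing over $\tau \in \mathcal{T}_h$, and using that $h$ is bounded on the compact surface (so $h^{-1} \gtrsim 1$) to absorb the zeroth-order contribution to $\Vert u \Vert_{H^1(\Gamma_h)}$, yields the inverse inequality.

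For (b), I would use the geometry of the closest point projection $p$. Because the vertices of every simplex lie on $\Gamma$ and $\Gamma$ is $C^3$, the signed distance satisfies $\Vert d \Vert_{L^\infty(\Gamma_h)} \leq C h^2$, and the angle between the (extended) normal $\nu$ and the discrete normal $\nu_h$ of $\Gamma_h$ is $O(h)$. Inserting these into the standard explicit formula for the measure quotient $\delta_h$ in terms of $d$, the Weingarten map $\mathcal{H}$ of $\Gamma$, and $\nu \cdot \nu_h$ (see \cite{1988-dziuk, 2013-elliott}), the $O(h)$ contributions cancel and $1 - \delta_h$ is controlled by $\Vert d \Vert_{L^\infty(\Gamma_h)}$ together with $1 - (\nu \cdot \nu_h)^2$, both of order $h^2$.

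For (c), observe first that $\iota_h^{-1} v = v \circ p$ on $\Gamma_h$. The $L^2$ equivalence is then immediate from (b): the change of variables defining $\delta_h$ gives $\int_{\Gamma_h} |v \circ p|^2 \delta_h \, d\sigma_h = \int_\Gamma |v|^2 \, d\sigma$, and since (b) bounds $\delta_h$ above and below away from zero for $h$ small, $\Vert \iota_h^{-1} v \Vert_{L^2(\Gamma_h)} \asymp \Vert v \Vert_{L^2(\Gamma)}$. For the gradient term I would invoke the chain rule for tangential gradients under $p$, which produces a matrix field $A_h$ on $\Gamma_h$ with $\nabla_{\Gamma_h}(v \circ p) = A_h \, (\nabla_\Gamma v) \circ p$, assembled from the tangential projections onto $T_x\Gamma$ and $T_x\Gamma_h$ and the factor $(I - d\,\mathcal{H})$; combining uniform control on $A_h$ and its inverse with the measure quotient from (b) then delivers the two-sided gradient bound and hence the claimed $H^1$-norm equivalence with constants independent of $h$. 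I expect this gradient part to be the crux, since the uniform invertibility of $A_h$ is where the surface geometry genuinely enters; the estimates $\Vert d \Vert_{L^\infty(\Gamma_h)} = O(h^2)$ and $\nu\cdot\nu_h = 1 + O(h^2)$ from (b) are precisely what force $A_h$ to differ from an isometry between the tangent spaces only at order $h^2$, closing the argument, and parts (a) and (b) themselves can be taken from \cite{1988-dziuk, 2013-elliott}.
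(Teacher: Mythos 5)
Your proposal is correct and takes essentially the same route as the paper: the paper proves (a) by citing the reference-element scaling argument (Theorem 3.2.6 in \cite{ciarlet}) together with the quasi-uniformity condition \eqref{eq:quasi-uniform}, and (b), (c) by citing Lemmas 4.1 and 4.2 of \cite{2013-elliott}, whose proofs are precisely the geometric estimates you reconstruct ($\Vert d \Vert_{L^{\infty}(\Gamma_h)} = O(h^2)$, $1 - \nu\cdot\nu_h = O(h^2)$ despite $|\nu - \nu_h| = O(h)$, the explicit formula for $\delta_h$, and the chain rule for the lift with uniformly invertible $A_h$). In short, you have correctly unpacked the standard arguments that the paper handles by citation.
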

\begin{proof}
    (a) follows by \eqref{eq:quasi-uniform} and Theorem 3.2.6 in \cite{ciarlet} (noting that it does not matter that our simplices are embedded in $\mathbb{R}^{d+1}$), for (b) and (c) see Lemma 4.1 and 4.2 in \cite{2013-elliott}. 
\end{proof}
We define the operator $\tau_h \in L(H)$ by 
\begin{align}\label{eq:scaling-by-quotient}
    \tau_h f := (\iota_h \delta_h^{1/2}) f,
\end{align}
i.e. scaling by the (lifted) square root of the quotient of surface measure. A consequence of Lemma \ref{lemma:sfem} (b)\footnote{Noting that $\vert 1 - \delta_h^{1/2} \vert = \vert (1 + \delta_h^{1/2})^{-1}(1 - \delta_h) \vert \leq \vert 1 - \delta_h \vert$.} above is the bound
\begin{align}\label{eq:quotient-scaling-estimate}
    \Vert I - \tau_h \Vert_{L(H)} \leq C \Vert 1 - \delta_h^{1/2} \Vert_{L^{\infty}(\Gamma_h)} \leq C h^2.
\end{align}

To $a_{j,h}$ we associate discrete operators $A_{j,h}' : V_h \to V_h$ satisfying $(A_{j,h}' u, v)_{L^2(\Gamma_h)} = a_{j,h}(u, v)$, for any $u, v \in V_h$. Finally, we denote the $L^2(\Gamma_h)$-orthogonal projection onto $V_h$ by $\mathcal{P}_h$, and define the finite dimensional operators,
\begin{align}\label{eq:Ajh}
    A_{j,h} := \iota_h A_{j,h}' \iota_h^{-1} : \iota_h V_h \to \iota_h V_h,
\end{align}
\begin{align}\label{eq:pih}
    \pi_h := \iota_h \mathcal{P}_h \iota_h^{-1} : L^2(\Gamma) \to \iota_h V_h.
\end{align}
We make the following assumption on the surface finite element approximation $A_{j,h}$ of $A_j$. 
\begin{assumption}\label{assumption:fem}
\hfill
\begin{description}
    \myitem{(N1)}\label{cond:discrete-operator} With $\lambda \geq 0$ as in \ref{cond:continuity-coercivity} the shifted sesquilinear form $\lambda (\cdot, \cdot)_{L^2(\Gamma_h)} + a_{1,h}(\cdot, \cdot)$, and the sesquilinear form $a_{2,h}(\cdot, \cdot)$ are coercive and continuous on $(V_h, \Vert \cdot \Vert_{H^1(\Gamma_h)})$, with coercivity and continuity constant independent of $h$. \medskip
    \myitem{(N2)}\label{cond:solution-operator} With $\lambda \geq 0$ as in \ref{cond:continuity-coercivity},
    \begin{align*}
        \Vert ((\lambda + A_1)^{-1} - (\lambda + A_{1,h})^{-1} \pi_h) (\lambda + A_1)^{\alpha} \Vert_{L(H)} \leq C h^{2-2\alpha}, 
    \end{align*}
    and
    \begin{align*}
        \Vert A_2^{\alpha} (A_2^{-1} - A_{2,h}^{-1} \pi_h) \Vert_{L(H)} \leq C h^{2 - 2\alpha},
    \end{align*}
    for $\alpha \in \{0, 1/2\}$, and some $C > 0$. \medskip
    \myitem{(N3)}\label{cond:fem-bound} There is $C > 0$ such that $\Vert (\lambda + A_{1,h})^{-1/2} \pi_h (\lambda + A_1)^{1/2} \Vert_{L(H)} \leq C$.
\end{description}
\end{assumption}

\begin{remark}\label{remark:numerics-assumptions} \hfill
\begin{enumerate}[(a)]
    \item 
    Condition \ref{cond:discrete-operator} holds under analogous conditions on the coefficients as those in Remark \ref{remark:model-assumptions} (a). This condition implies that the operators $A_{j,h} : \iota_h V_h \to \iota_h V_h$ are uniformly sectorial in the sense of condition (A4) in \cite{2024-auestad}. To see this, it suffices to verify that the sesquilinear forms $((\lambda + A_{1,h}) \cdot, \cdot)_H$ and $(A_{2,h} \cdot, \cdot)_H$ are coercive and continuous on $(\iota_h V_h, \Vert \cdot \Vert_{H^1(\Gamma)})$ with continuity and coercivity constants independent of $h$. To that end, let $u, v \in \iota_h V_h$, and note that
    \begin{align*}
        \lambda (u, v)_H = \lambda (\iota_h^{-1} u, (\iota_h^{-1} v) (\delta_h - 1))_{L^2(\Gamma_h)} + \lambda (\iota_h^{-1} u, \iota_h^{-1} v)_{L^2(\Gamma_h)},
    \end{align*}
    with $\vert \lambda (\iota_h^{-1} u, (\iota_h^{-1} v) (\delta_h - 1))_{L^2(\Gamma_h)} \vert \leq C h^2 \Vert u \Vert_H \Vert v \Vert_H$ by Lemma \ref{lemma:sfem} (b), and
    \begin{align*}
        (A_{j,h} u, v)_H &= (A_{j,h}' \iota_h^{-1} u, (\iota_h^{-1} v) \delta_h )_{L^2(\Gamma_h)} \\
        &= (A_{j,h}' \iota_h^{-1} u, \mathcal{P}_h (\iota_h^{-1} v) \delta_h )_{L^2(\Gamma_h)} \\
        &= a_{j,h}(\iota_h^{-1} u, \mathcal{P}_h (\iota_h^{-1} v) \delta_h) \\
        &= a_{j,h}(\iota_h^{-1} u, \mathcal{P}_h (\iota_h^{-1} v) (\delta_h - 1)) + a_{j,h}(\iota_h^{-1} u, \iota_h^{-1} v)
    \end{align*}
    where we used the definition of $A_{j,h}'$. For the first term above, note that by \ref{cond:discrete-operator} and Lemma \ref{lemma:sfem}
    \begin{align*}
        \vert a_{j,h}(\iota_h^{-1} u, \mathcal{P}_h (\iota_h^{-1} v) (\delta_h - 1)) \vert &\leq C \Vert u \Vert_{H^1(\Gamma)} \Vert \mathcal{P}_h (\iota_h^{-1} v) (\delta_h - 1) \Vert_{H^1(\Gamma_h)} \\
        &\leq C \Vert u \Vert_{H^1(\Gamma)} h^{-1} \Vert \mathcal{P}_h (\iota_h^{-1} v) (\delta_h - 1) \Vert_{L^2(\Gamma_h)} \\
        &\leq C \Vert u \Vert_{H^1(\Gamma)} h^{-1} \Vert (\iota_h^{-1} v) (\delta_h - 1) \Vert_{L^2(\Gamma_h)} \\
        &\leq C \Vert u \Vert_{H^1(\Gamma)} h^{-1} \Vert (\iota_h^{-1} v) \Vert_{L^2(\Gamma_h)} \Vert (\delta_h - 1) \Vert_{L^{\infty}(\Gamma_h)} \\
        &\leq C h \Vert u \Vert_{H^1(\Gamma)} \Vert v \Vert_{H^1(\Gamma)}.
    \end{align*}
    The second term combined with the term $\lambda (\iota_h^{-1} \cdot, \iota_h^{-1} \cdot)_{L^2(\Gamma_h)}$ is coercive and continuous on $(\iota_h V_h, \Vert \cdot \Vert_{H^1(\Gamma)})$ with constants independent of $h$ by \ref{cond:discrete-operator} and Lemma \ref{lemma:sfem} (c). Thus, for $h$ small enough, $((\lambda + A_{1,h}) \cdot, \cdot)$ and $(A_{2,h} \cdot, \cdot)$ are coercive and continuous on $(\iota_h V_h, \Vert \cdot \Vert_{H^1(\Gamma)})$ with continuity and coercivity constants independent of $h$. 
    \item
    We interpret \ref{cond:solution-operator} as a condition on the rate of convergence of the error $u - u_h$, where $(\lambda + A_1) u = f$ while $(\lambda + A_{1,h}) u_h = \pi_h f$. In the first inequality of \ref{cond:solution-operator}, different $\alpha$ corresponds to different regularity of $f$, while in the second it corresponds to different norms used for the difference $u - u_h$. 
    \item
    Condition \ref{cond:discrete-operator}--\ref{cond:fem-bound} holds in the case of $A_1 = -\Delta_{\Gamma}$, $A_2 = I - \Delta_{\Gamma}$, (with $\mathcal{A}_{j,h} = I, b_{j,h} = 0$, $j = 1, 2$ and $\alpha_{2,h} = 1$), as shown in Section 3 in \cite{2024-auestad}.
\end{enumerate}
\end{remark}

\subsection*{Quadrature approximation of fractional operator}

In order to approximate the fractional power operator, we use the quadrature in \cite{2019-bonito}. To that end, let
\begin{align}\label{eq:quadrature}
    Q_k^{-\gamma}(A_2) := \frac{k \sin(\pi \gamma)}{\pi} \sum_{j = -M}^N e^{(1-\gamma) j k } (e^{j k} I + A_2)^{-1}.
\end{align}
Here, $k > 0$ is the quadrature resolution, and
\begin{align*}
    N = \bigg\lceil \frac{\pi^2}{2 \gamma k^2} \bigg\rceil, \quad M = \bigg\lceil \frac{\pi^2 }{2(1 - \gamma) k^2} \bigg\rceil.
\end{align*}
The following lemma describes the convergence rate of this approximation. 
\begin{lemma}\label{lemma:A_h-quadrature}
Let $A_{2,h}$ be as above, $Q_k^{-\gamma}$ be given by \eqref{eq:quadrature}, and $\gamma \in (0,1)$. Then, for some $C > 0$ independent of $k$ and $h$, the following estimate holds,
\begin{align}
    \Vert (A_{2,h}^{-\gamma} - Q_k^{-\gamma}(A_{2,h})) u \Vert_H \leq C e^{-\frac{\pi^2}{2k}} \Vert u \Vert_H, \quad \text{for any } u \in \iota_h V_h.
\end{align}
\end{lemma}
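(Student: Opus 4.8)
The plan is to recognize $Q_k^{-\gamma}(A_{2,h})$ as the truncated sinc (trapezoidal) quadrature of an integral representation of the fractional power $A_{2,h}^{-\gamma}$ on the finite-dimensional space $\iota_h V_h$, and then to import the quadrature analysis of \cite{2019-bonito}. By Remark \ref{remark:numerics-assumptions}(a) (which rests on Assumption \ref{cond:discrete-operator}), the form $a_{2,h}$ is coercive and continuous on $(\iota_h V_h, \Vert \cdot \Vert_{H^1(\Gamma)})$ with $h$-independent constants, so $A_{2,h}$ is boundedly invertible on $\iota_h V_h$ with numerical range—hence spectrum—contained in a fixed sector of the right half-plane, bounded away from the origin, uniformly in $h$. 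The integral representation of the negative fractional power (the analogue of \eqref{eq:fractional-powers} for $\gamma \in (0,1)$, with $\lambda = 0$ since no shift is needed here) then reads
\begin{align*}
    A_{2,h}^{-\gamma} u = \frac{\sin(\pi\gamma)}{\pi} \int_0^{\infty} t^{-\gamma} (t I + A_{2,h})^{-1} u \, dt, \quad u \in \iota_h V_h,
\end{align*}
and the substitution $t = e^y$ turns this into $\frac{\sin(\pi\gamma)}{\pi} \int_{-\infty}^{\infty} e^{(1-\gamma)y} (e^y I + A_{2,h})^{-1} u \, dy$, whose trapezoidal rule with step $k$ and nodes $y_j = jk$, truncated to $-M \leq j \leq N$, is precisely $Q_k^{-\gamma}(A_{2,h}) u$. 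All resolvent norms below are taken in the $H$-norm restricted to $\iota_h V_h$, consistent with the numerical-range bound of Remark \ref{remark:numerics-assumptions}(a).

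Next I would split the error into three contributions: the trapezoidal quadrature error of the full integral over $\mathbb{R}$, and the two tail-truncation errors at $j = N$ and $j = -M$. The integrand $g(y) := e^{(1-\gamma)y}(e^y I + A_{2,h})^{-1}$ extends analytically in $y$ to a horizontal strip $\vert \mathrm{Im}(y) \vert < d$: since the spectrum of $A_{2,h}$ lies in a sector $\{ \vert \arg z \vert \leq \psi \}$ with $\psi < \pi/2$ uniformly in $h$, the resolvent $(e^z I + A_{2,h})^{-1}$ remains bounded as long as $\vert \mathrm{Im}(z) \vert < \pi - \psi$, so one may fix any $d \in (\pi/4, \pi - \psi)$. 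The classical sinc-quadrature error bound then controls the first contribution by $C e^{-2\pi d / k} \leq C e^{-\pi^2/(2k)}$. For the truncation errors, the decay $\Vert g(y) \Vert \lesssim e^{-\gamma y}$ as $y \to +\infty$ (from the resolvent decay) and $\Vert g(y) \Vert \lesssim e^{(1-\gamma)y}$ as $y \to -\infty$ (since $A_{2,h}^{-1}$ is bounded) give tails bounded by $\frac{1}{\gamma} e^{-\gamma N k}$ and $\frac{1}{1-\gamma} e^{-(1-\gamma) M k}$; the choices $N = \lceil \pi^2 / (2\gamma k^2) \rceil$ and $M = \lceil \pi^2 / (2(1-\gamma) k^2) \rceil$ make $\gamma N k \geq \pi^2/(2k)$ and $(1-\gamma) M k \geq \pi^2/(2k)$, so both tails are $\leq C e^{-\pi^2/(2k)}$. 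Summing the three pieces yields the claimed rate.

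The main obstacle is to guarantee that the constant $C$ is independent of $h$. This reduces to showing that the strip half-width $d$, the sectoriality angle $\psi$, the spectral gap $c > 0$, and the implicit constants in the resolvent decay can all be chosen uniformly in $h$. Each of these follows from the $h$-uniform coercivity and continuity of $a_{2,h}$ on $(\iota_h V_h, \Vert \cdot \Vert_{H^1(\Gamma)})$ established in Remark \ref{remark:numerics-assumptions}(a): coercivity provides the $h$-independent lower bound on the spectrum, hence on $\Vert (t I + A_{2,h})^{-1} \Vert_{L(H)}$ for $t \geq 0$, and together with continuity confines the numerical range—and thus the spectrum—to a fixed sector, which yields the $h$-uniform resolvent bounds on the strip required by the sinc estimate. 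With these uniform bounds in place, the analysis of \cite{2019-bonito} applies to $A_{2,h}$ for every $h$ with a common constant, and the conclusion follows. I do not expect the analyticity and decay bookkeeping to be delicate; the only genuine care is in tracking the $h$-independence of every constant through the resolvent estimates.
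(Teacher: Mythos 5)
Your proposal is correct and follows essentially the same route as the paper: the paper's proof simply invokes Theorem 3.2 of \cite{2019-bonito}, observing that the constant there depends only on $\gamma$ and the continuity and coercivity constants of $(A_{2,h}\,\cdot,\cdot)_H$ on $(\iota_h V_h, \Vert \cdot \Vert_{H^1(\Gamma)})$, which are independent of $h$ by \ref{cond:discrete-operator} and Remark \ref{remark:numerics-assumptions}(a)---exactly the $h$-uniformity argument on which your proof turns. Your reconstruction of the internals of that cited result (Balakrishnan representation, sinc quadrature error in a strip of half-width $d > \pi/4$, and the tail truncation matching the choices of $N$ and $M$) is accurate, but the paper treats it as a black box.
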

\begin{proof}
    As stated in Theorem 3.2 in \cite{2019-bonito}, the constant $C$ depends only on $\gamma$ and the continuity and coercivity constant of $(A_{2,h} \cdot, \cdot)_H$ on $(\iota_h V_h, \Vert \cdot \Vert_{H^1(\Gamma)})$, which are independent of $h$ by \ref{cond:discrete-operator} (see Remark \ref{remark:numerics-assumptions} (a)).
\end{proof}

\section{Numerical method and convergence results}\label{section:numerical-method}

In this section we present our proposed finite element approximation, and state our convergence results. Our semidiscrete approximation is based on the following SPDE with values in $\iota_h V_h$,
\begin{align}\label{eq:mild-solution-semi-approx}
    d u_h(t) = -A_{1,h} u_h(t) \, dt + A_{2,h}^{-\gamma} \pi_h \tau_h \, dW(t), \quad u_h(0) = \pi_h \xi.
\end{align}
When $\gamma \in (0,1)$ we approximate the fractional power operator by the quadrature approximation \eqref{eq:quadrature}, in which case \eqref{eq:mild-solution-semi-approx} becomes
\begin{align}\label{eq:semidiscrete-gamma-in-0-1}
    d u_h(t) = -A_{1,h} u_h(t) \, dt + Q_k^{-\gamma}(A_{2,h}) \pi_h \tau_h \, dW(t),
\end{align}
with convention $Q_k^{-1}(A_{2,h}) = A_{2,h}^{-1}$ and $Q_k^0(A_{2,h}) = I$. Discretizing \eqref{eq:semidiscrete-gamma-in-0-1} in time with backward Euler, we get our fully discrete approximation,
\begin{align}\label{eq:fully-discrete-scheme}
    (I + \Delta t A_{1,h})u_{h,\Delta t}(t_{n+1}) = u_{h,\Delta t}(t_n) + Q_k^{-\gamma}(A_{2,h}) \pi_h \tau_h (W(t_{n+1}) - W(t_n)),
\end{align}
with $t_n = n \Delta t$, $\Delta t = T / N$ for some $N > 0$, and we define $u_{h,\Delta t}(t), t \in (t_n, t_{n+1})$ by \eqref{eq:fully-discrete-2}. 

In order to simulate from \eqref{eq:fully-discrete-scheme} we rewrite this equation as a system of equations for the coefficients of $\iota_h^{-1} u_{h,\Delta t}$ in the nodal basis of $V_h$. Denote this basis by $\varphi_j, \ j = 1, \dots, N_h$, and let
\begin{align}\label{eq:fem-matrices}
    (M_h)_{ij} = \int_{\Gamma_h} \varphi_j \varphi_i \, d\sigma_h, \quad (T_{h})_{ij} = a_{1,h}(\varphi_j, \varphi_i), \quad (K_h)_{ij} = a_{2,h}(\varphi_j, \varphi_i).
\end{align}
We may express $u_{h,\Delta t}(t_n) = \iota_h \sum_{j = 1}^{N_h} \alpha_j^n \varphi_j$, for coefficients $\alpha_j^n$, and using Lemma \ref{lemma:discrete-cylindrical-wiener-process} (see Appendix \ref{app:derivation-of-system} for a detailed derivation), we note that \eqref{eq:fully-discrete-scheme} may be rewritten as in \eqref{eq:scheme-basis-coefficients}.

The following theorem is the main result of this paper, and describes rates of strong convergence of our approximation \eqref{eq:fully-discrete-scheme}.
\begin{theorem}\label{theorem:strong-rate}
Suppose Assumption \ref{assumption:model} and \ref{assumption:fem} hold, $\gamma \in (d/4 - 1/2, 1] \cap [0,1]$, the quadrature resolution $k \leq -\frac{\pi^2}{2}(2 \gamma + 1)^{-1} \log(h)^{-1}$, and let $u$ and $u_{h,\Delta t}$ be the solutions to \eqref{eq:mild-form} and \eqref{eq:fully-discrete-scheme}, respectively. Then for any $\theta \in [0, 2 \gamma + 1 - d/2) \cap [0,2]$ and $\rho \in [-1, \theta] \cap [-2 + \theta, \theta]$, there are $C_{p,\theta}, c > 0$ such that
\begin{align*}
    \Vert u(t) - u_{h, \Delta t}(t) \Vert_{L^p(\Omega; H)} \leq C_{p,\theta} e^{c \lambda t} (h^{\theta} + \Delta t^{\theta / 2}) (1 + t^{-\theta / 2 + \rho / 2} \Vert (\lambda + A_1)^{\rho / 2} \xi \Vert_{L^p(\Omega ; H)}).
\end{align*}
\end{theorem}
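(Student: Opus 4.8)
The plan is to interpolate between $u$ and $u_{h,\Delta t}$ by two intermediate mild solutions: the semidiscrete solution $u_h$ of \eqref{eq:mild-solution-semi-approx} and the quadrature–semidiscrete solution $u_h^Q$ of \eqref{eq:semidiscrete-gamma-in-0-1}, both written in mild form through the analytic semigroup $S_{1,h}(\cdot)$ generated by $-A_{1,h}$ (which is uniformly sectorial in $h$ by Remark \ref{remark:numerics-assumptions} (a)). Splitting
\begin{align*}
u(t) - u_{h,\Delta t}(t) = \big(u(t) - u_h(t)\big) + \big(u_h(t) - u_h^Q(t)\big) + \big(u_h^Q(t) - u_{h,\Delta t}(t)\big)
\end{align*}
and applying the triangle inequality in $L^p(\Omega;H)$ reduces the theorem to a spatial discretization error, a quadrature error, and a backward Euler error. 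In each term the deterministic initial-data contribution decouples from the stochastic convolution; for the latter I would use the Burkholder--Davis--Gundy inequality to bound the $L^p(\Omega;H)$-norm by a constant times the $L^2$-in-time Hilbert--Schmidt norm of the (deterministic) integrand, so that throughout it suffices to estimate $L(H)$- and $L_2(H)$-norms of operator-valued expressions and integrate against the parabolic smoothing in time.

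The spatial error $u-u_h$ is the core of the argument. Its two pieces are $(S_1(t)-S_{1,h}(t)\pi_h)\xi$ and the difference of stochastic convolutions with kernels $S_1(t-s)A_2^{-\gamma}$ and $S_{1,h}(t-s)A_{2,h}^{-\gamma}\pi_h\tau_h$. The key is to upgrade the resolvent estimates \ref{cond:solution-operator} to estimates for the semigroup and the negative fractional power. Representing $S_1(t)-S_{1,h}(t)\pi_h$ by its Dunford integral and $A_2^{-\gamma}-A_{2,h}^{-\gamma}\pi_h\tau_h$ by the integral representation \eqref{eq:fractional-powers}, and inserting \ref{cond:solution-operator} along the relevant contours, I would derive bounds of the type $\Vert (S_1(t)-S_{1,h}(t)\pi_h)(\lambda+A_1)^{-\rho/2}\Vert_{L(H)} \leq C e^{c\lambda t} h^{\theta} t^{-(\theta-\rho)/2}$ and $\Vert A_2^{\beta}(A_2^{-\gamma}-A_{2,h}^{-\gamma}\pi_h\tau_h)\Vert_{L(H)}\leq C h^{2\beta}$, the measure-quotient factor $\tau_h$ being absorbed through \eqref{eq:quotient-scaling-estimate}. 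The first bound controls the initial-data term and determines the admissible range of $\rho$. For the stochastic convolution I would split the integrand as $(S_1(t-s)-S_{1,h}(t-s)\pi_h)A_2^{-\gamma}$ plus $S_{1,h}(t-s)\pi_h(A_2^{-\gamma}-A_{2,h}^{-\gamma}\pi_h\tau_h)$, peel off a Hilbert--Schmidt factor (by \ref{cond:hilbert-schmidt} any power $A_2^{-\eta}$ with $\eta>d/4$ lies in $L_2(H)$, which is available since $\gamma+1/2>d/4$ by \ref{cond:gamma-size}), use \ref{cond:A1A2} to trade powers of $A_2$ for powers of $A_1$ so that the smoothing from Lemma \ref{lemma:analytic-semigroup}(a) can act, and balance the resulting singularity $(t-s)^{-\alpha}$ so that the $s$-integral converges. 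The admissible range $\theta\in[0,2\gamma+1-d/2)\cap[0,2]$ is exactly what makes this integral finite: heuristically $2\gamma$ units of smoothing come from the covariance, one (borderline) unit from the parabolic smoothing of $S_1$, and $d/2$ is spent on the Hilbert--Schmidt requirement.

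The quadrature error $u_h-u_h^Q$ is purely a difference of stochastic convolutions with kernel $S_{1,h}(t-s)(A_{2,h}^{-\gamma}-Q_k^{-\gamma}(A_{2,h}))\pi_h\tau_h$. Bounding its Hilbert--Schmidt norm crudely by the operator-norm estimate of Lemma \ref{lemma:A_h-quadrature} times $\Vert\pi_h\tau_h\Vert_{L_2(H)}$, and using $\Vert\pi_h\Vert_{L_2(H)}\le C\sqrt{N_h}\le Ch^{-d/2}$ together with the choice $k\le -\tfrac{\pi^2}{2}(2\gamma+1)^{-1}\log(h)^{-1}$, which gives $e^{-\pi^2/(2k)}\le h^{2\gamma+1}$, yields the bound $Ch^{2\gamma+1-d/2}\le Ch^{\theta}$. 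The backward Euler error $u_h^Q-u_{h,\Delta t}$ is handled by the same machinery as the spatial error, with the rational function $(I+\Delta t A_{1,h})^{-1}$ and its powers replacing $S_{1,h}$: uniform sectoriality supplies discrete smoothing estimates $\Vert (\lambda+A_{1,h})^{\alpha}(I+\Delta t A_{1,h})^{-n}\Vert$ and the standard rational-approximation bound $\Vert (S_{1,h}(t_n)-(I+\Delta t A_{1,h})^{-n})(\lambda+A_{1,h})^{-\theta/2}\Vert\le C\Delta t^{\theta/2}$, which combined with the Hilbert--Schmidt balancing above produces the $\Delta t^{\theta/2}$ contribution.

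The routine parts are the triangle-inequality bookkeeping and the quadrature estimate; the delicate work is the spatial stochastic convolution, where I expect the main obstacle. The difficulty is threefold: converting the resolvent estimates \ref{cond:solution-operator} into sharp semigroup and fractional-power error estimates requires contour arguments that must be carried out for the non-selfadjoint operators $A_1,A_2$; since $A_1$ and $A_2$ need not commute, moving between their fractional powers must be done solely through the compatibility estimates \ref{cond:A1A2}; and the exponent $\theta$ must be tracked simultaneously against the Hilbert--Schmidt requirement and the parabolic smoothing, which is only borderline integrable at the $1/2$-threshold, so the balancing that yields the sharp upper limit $2\gamma+1-d/2$ leaves essentially no slack and must be carried out with care.
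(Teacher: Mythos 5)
Your overall strategy would deliver the theorem, but it is organized differently from the paper's proof. You interpolate through two intermediate semidiscrete solutions ($u_h$, then $u_h^Q$) and treat the spatial, quadrature, and backward Euler errors as three separate differences, each driven by the semidiscrete semigroup $S_{1,h}$ or its rational approximation. The paper instead keeps the fully discrete semigroup $S_{h,\Delta t}$ of \eqref{eq:fully-discrete-semigroup-operator} in every term and splits the error into the initial-data part $(S_1(t)-S_{h,\Delta t}(t)\pi_h)\xi$, the stochastic-convolution part with kernels $S_1(t-s)A_2^{-\gamma}$ versus $S_{h,\Delta t}(t-s)A_{2,h}^{-\gamma}\pi_h\tau_h$ (Lemma \ref{lemma:error-stochastic-convolution}), and the quadrature part. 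This lets it import the combined space--time estimates of Lemmas \ref{lemma:fully-discrete-semigroup-approximation}, \ref{lemma:fully-discrete-semigroup-smoothing} and \ref{lemma:new-norm-semidiscrete-error} directly from \cite{2024-auestad} and \cite{2025-auestad}, so no separate non-smooth-data error analysis for backward Euler is ever performed. Your route buys modularity (each discretization error isolated), but it costs you exactly the lemmas you would have to prove from scratch: uniform discrete smoothing for $S_{1,h}$ and the rational-approximation bounds $\Vert (S_{1,h}(t_n)-(I+\Delta t A_{1,h})^{-n})(\lambda+A_{1,h})^{-\theta/2}\Vert_{L(H)}\le C\Delta t^{\theta/2}$ with constants uniform in $h$. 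Your quadrature argument (operator-norm bound from Lemma \ref{lemma:A_h-quadrature} times $N_h^{1/2}\le Ch^{-d/2}$, and $e^{-\pi^2/(2k)}\le h^{2\gamma+1}$ from the choice of $k$) coincides with the paper's, via Lemma \ref{lemma:hilbert-schmidt-finite-rank}.

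Two points need repair. First, your displayed target bound $\Vert A_2^{\beta}(A_2^{-\gamma}-A_{2,h}^{-\gamma}\pi_h\tau_h)\Vert_{L(H)}\le Ch^{2\beta}$ is wrong as written: the rate must reflect the smoothness gap between $\gamma$ and $\beta$, i.e.\ it should be of order $h^{\min(2(\gamma-\beta),2)-\epsilon}$. The paper obtains, with $\beta=d/4-1/2+2\epsilon$, the rate $h^{2\gamma+1-d/2-5\epsilon}$ by inserting Lemma \ref{lemma:new-norm-semidiscrete-error} into the integral representation \eqref{eq:fractional-powers}; with your exponent the bound reads $h^{d/2-1+4\epsilon}$, which is useless for $d\ge2$ and would collapse the stochastic-convolution estimate. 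Since your closing heuristic budgets the exponents correctly, this looks like a slip rather than a conceptual error, but it is precisely the estimate on which the sharp rate hinges. Second, in your time-discretization term the stochastic integrand $(S_{1,h}(t-s)-S_{h,\Delta t}(t-s))Q_k^{-\gamma}(A_{2,h})\pi_h\tau_h$ contains only discrete operators, so you cannot directly peel off the continuous Hilbert--Schmidt factor $A_2^{-d/4-\epsilon}$ provided by \ref{cond:hilbert-schmidt}; you need either a uniform-in-$h$ discrete substitute such as $\Vert A_{2,h}^{-d/4-\epsilon}\pi_h\tau_h\Vert_{L_2(H)}\le C$, or to add and subtract $A_2^{-\gamma}$ once more and fall back on the spatial estimates. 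This extra bookkeeping is exactly what the paper's decomposition is engineered to avoid: it always keeps a continuous power of $A_2$ adjacent to the operator difference, which is what makes the adjoint manipulation in the proof of Lemma \ref{lemma:error-stochastic-convolution} go through.
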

A consequence of the $L^p$ convergence of Theorem \ref{theorem:strong-rate} is essentially the same rate of pathwise convergence, as described in the following corollary. 
\begin{corollary}\label{cor:pathwise-rate}
    Suppose the conditions of Theorem \ref{theorem:strong-rate} hold. Then, for any $\theta \in [0, 2\gamma + 1 - d/2) \cap [0,2]$, $\rho \in [-1, \theta] \cap [-2 + \theta, \theta]$, $\epsilon > 0$ small, and sequences $h_n, \Delta t_n$ such that $\sum_{n = 1}^{\infty} (h_n^{\theta} + \Delta t_n^{\theta/2})^p < \infty$ for some $p > 0$, there is a random variable $M_{\theta, \epsilon} > 0$ ensuring
    \begin{align*}
        \Vert u(t) - u_{h_n, \Delta t_n}(t) \Vert_H \leq e^{c\lambda t}(C t^{-\theta / 2 + \rho / 2} \Vert (\lambda + A_1)^{\rho / 2} \xi \Vert_H + M_{\theta, \epsilon}) (h_n^{\theta} + \Delta t_n^{\theta / 2})^{1-\epsilon}, \quad P\text{-a.s.}
    \end{align*}
\end{corollary}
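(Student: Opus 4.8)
The plan is to derive the pathwise estimate from the strong ($L^p$) estimate of Theorem \ref{theorem:strong-rate} via a Borel--Cantelli argument, which is the standard route from moment bounds to almost sure rates. Fix $\theta$ and $\rho$ as in the statement, and let $p > 0$ be the exponent for which $\sum_n (h_n^{\theta} + \Delta t_n^{\theta/2})^p < \infty$. Since Theorem \ref{theorem:strong-rate} holds for every $p \geq 2$, I may apply it with an exponent as large as needed; let me write $\eta_n := h_n^{\theta} + \Delta t_n^{\theta/2}$ for brevity and set
\begin{align*}
    Z_n := \frac{\Vert u(t) - u_{h_n, \Delta t_n}(t) \Vert_H}{e^{c\lambda t}\,\eta_n^{1-\epsilon}}.
\end{align*}
The goal is to show $\sup_n Z_n < \infty$ $P$-a.s., with the bound on $\sup_n Z_n$ absorbing the deterministic $t^{-\theta/2+\rho/2}\Vert(\lambda+A_1)^{\rho/2}\xi\Vert_H$ term plus a finite random remainder $M_{\theta,\epsilon}$.

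First I would apply Theorem \ref{theorem:strong-rate} with a large exponent $q \geq 2$ (to be chosen) to obtain
\begin{align*}
    \Vert u(t) - u_{h_n,\Delta t_n}(t)\Vert_{L^q(\Omega;H)} \leq C_{q,\theta}\, e^{c\lambda t}\,\eta_n\,\bigl(1 + t^{-\theta/2+\rho/2}\Vert(\lambda+A_1)^{\rho/2}\xi\Vert_{L^q(\Omega;H)}\bigr).
\end{align*}
Next, by Markov's (Chebyshev's) inequality applied to the $q$-th moment, for any threshold $\kappa > 0$,
\begin{align*}
    P\bigl(Z_n > \kappa\bigr) \leq \kappa^{-q}\, \eta_n^{q\epsilon}\, C_{q,\theta}^{q}\,\bigl(1 + t^{-\theta/2+\rho/2}\Vert(\lambda+A_1)^{\rho/2}\xi\Vert_{L^q(\Omega;H)}\bigr)^{q}.
\end{align*}
To make $\sum_n P(Z_n > \kappa)$ summable I need $\sum_n \eta_n^{q\epsilon} < \infty$; choosing $q$ large enough that $q\epsilon \geq p$, this follows from the hypothesis $\sum_n \eta_n^p < \infty$ (note $\eta_n \to 0$, so the larger power is dominated). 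The Borel--Cantelli lemma then gives $P(Z_n > \kappa \text{ infinitely often}) = 0$, hence $\limsup_n Z_n \leq \kappa$ $P$-a.s., and since $\kappa$ was arbitrary among values making the series finite, $\sup_n Z_n$ is finite $P$-a.s.; the remaining finitely many indices contribute only a finite random quantity. Collecting terms and isolating the deterministic initial-condition contribution yields the claimed bound with $M_{\theta,\epsilon} := \sup_n Z_n$ minus the deterministic part (which is finite $P$-a.s.).

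The main obstacle, or rather the point requiring care, is the handling of the random initial condition $\xi$: the strong estimate carries $\Vert(\lambda+A_1)^{\rho/2}\xi\Vert_{L^q(\Omega;H)}$, a moment norm, whereas the target pathwise bound features the pathwise quantity $\Vert(\lambda+A_1)^{\rho/2}\xi\Vert_H$. I would separate the stochastic-convolution part of the error (which has zero initial data and is genuinely driven by $W$) from the deterministic-semigroup part $S_1(t)\xi$ versus its discretization, treating the latter pathwise using Lemma \ref{lemma:analytic-semigroup} and Assumption \ref{assumption:fem} so that the $\xi$-dependence appears through $\Vert(\lambda+A_1)^{\rho/2}\xi\Vert_H$ rather than its $L^q$-norm; the Borel--Cantelli argument is then applied only to the convolution part, whose $L^q$-norm is finite independently of $\xi$. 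This split is what produces the clean separation in the statement between the deterministic $C\,t^{-\theta/2+\rho/2}\Vert(\lambda+A_1)^{\rho/2}\xi\Vert_H$ term and the purely random $M_{\theta,\epsilon}$, and verifying that the $\xi$-moments stay finite under Assumption \ref{assumption:model} (M5) (where only $p$-th moments for a fixed $p$ are assumed) requires that the deterministic part not pass through the high-moment Borel--Cantelli step.
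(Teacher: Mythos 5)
Your proposal is correct and takes essentially the same route as the paper: the paper also splits the error into the deterministic part $(S_1(t)-S_{h,\Delta t}\pi_h)\xi$, treated pathwise so that $\xi$ enters only through $\Vert(\lambda+A_1)^{\rho/2}\xi\Vert_H$, and the stochastic convolution, treated by combining the $L^q$-bound of Theorem \ref{theorem:strong-rate} (valid for all $q\geq 2$ since this part has zero initial data) with exactly the Chebyshev--Borel--Cantelli argument you spell out, which the paper delegates to a citation (Lemma 4.5 in \cite{2025-auestad}). The only cosmetic difference is that for the initial-condition part the paper invokes Lemma \ref{lemma:fully-discrete-semigroup-approximation} directly, whereas you propose rebuilding that estimate from Lemma \ref{lemma:analytic-semigroup} and Assumption \ref{assumption:fem}; using Lemma \ref{lemma:fully-discrete-semigroup-approximation} as stated is the cleaner choice.
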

\begin{proof}
    This follows by noting that
    \begin{align*}
        u(t) - u_{h_n, \Delta t_n}(t) &= (S_1(t) - S_{h,\Delta t} \pi_h) \xi + \int_0^t (S_1(t-s) A_2^{-\gamma} - S_{h,\Delta t} Q_k^{-\gamma}(A_{2,h}) \pi_h \tau_h) \, dW \\
        &= (i) + (ii),
    \end{align*}
    and applying Lemma \ref{lemma:fully-discrete-semigroup-approximation} to $(i)$, and Theorem \ref{theorem:strong-rate} combined with Lemma 4.5 in \cite{2025-auestad} to $(ii)$. 
\end{proof}

\begin{remark}
    It is worth noting that the formulation of the surface finite element approximation \eqref{eq:fully-discrete-scheme} differs from that commonly used in literature (see, e.g., \cite{2013-elliott}). In the surface finite element literature, one commonly refers to $\iota_h^{-1} u_{h,\Delta t}$ as the surface finite element approximation, while $u_{h,\Delta t}$ is the ``lifted" surface finite element approximation. Similarly, and in contrast to \eqref{eq:mild-solution-semi-approx}, the equation defining the semidiscrete approximation would involve $\iota_h^{-1} u_h$ and slightly different operators $A_{j,h}$ and $\pi_h$. Our reason for choosing the formulation \eqref{eq:mild-solution-semi-approx} is that it simplifies the convergence analysis: for one, estimates related to the resolvent of $A_{j,h}$ in Assumption \ref{assumption:fem} are directly transferrable to the abstract analysis in \cite{2024-auestad}, making the estimates of Lemma \ref{lemma:fully-discrete-semigroup-approximation}, \ref{lemma:fully-discrete-semigroup-smoothing} and \ref{lemma:new-norm-semidiscrete-error} available, which are essential components in the convergence analysis of our discretization (see the proof of Theorem \ref{theorem:strong-rate}). Moreover, it avoids excessive notation and use of ``lifts" in the proof of Theorem \ref{theorem:strong-rate}, effectively making it almost identical to the convergence analysis in the flat domain case (see the proof of Theorem 3.1 in \cite{2025-auestad}), with the exception of the additional operator $\tau_h$. Both the traditional and proposed formulations give the same fully discrete scheme involving the nodal basis coefficients \eqref{eq:scheme-basis-coefficients}, which is what is implemented and used to simulate.
\end{remark}

\section{Proof of Theorem \ref{theorem:strong-rate}}

To prove Theorem \ref{theorem:strong-rate}, we need a couple of lemmas. Let $S_{h,\Delta t}(\cdot)$ be our fully discrete approximation of $S_1(\cdot)$ based on backward Euler, defined by
\begin{align}\label{eq:fully-discrete-semigroup-operator}
    S_{h,\Delta t}(t) :=
    \begin{cases}
        I, \quad &t = 0, \\
        r(\Delta t A_{1,h} )^{n+1}, \quad &t \in (t_n,t_{n+1}],
    \end{cases}
\end{align}
where $r(z) := (1 + z)^{-1}$, $\chi_D(\cdot)$ is the indicator function on $D$. The following lemma describes the error of this approximation. 
\begin{lemma}\label{lemma:fully-discrete-semigroup-approximation}
    Let $S_{h,\Delta t}(t)$ be given as in \eqref{eq:fully-discrete-semigroup-operator}. Under Assumption \ref{assumption:model} and \ref{assumption:fem}, there are $C, c, \delta > 0$ such that
    \begin{align*}
        \Vert (S_1(t) - S_{h,\Delta t}(t) \pi_h ) u \Vert_H &\leq C e^{c (\lambda-\delta) t} t^{-\theta/2 + \rho/2} (h^{\theta} + \Delta t^{\theta / 2}) \\
        &\qquad \times \min(\Vert (\lambda + A_1)^{\rho/2} u \Vert_H, \Vert A_2^{\rho / 2} u \Vert_H),
    \end{align*}
    for $\theta \in [0,2]$, and $\rho \in [-1, \theta] \cap [-2 + \theta, \theta]$.
\end{lemma}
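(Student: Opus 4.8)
The plan is to split the error into a spatial (semidiscrete) contribution and a temporal (backward Euler) contribution, and to carry out all estimates in the norm $\Vert(\lambda+A_1)^{\rho/2}u\Vert_H$; the version with $\Vert A_2^{\rho/2}u\Vert_H$ then follows at the end, producing the $\min$. Writing $S_{1,h}(t):=e^{-tA_{1,h}}$ for the continuous-in-time solution operator of $-A_{1,h}$ on $\iota_h V_h$, I would decompose
\begin{align*}
S_1(t)-S_{h,\Delta t}(t)\pi_h = \big(S_1(t)-S_{1,h}(t)\pi_h\big) + \big(S_{1,h}(t)-S_{h,\Delta t}(t)\big)\pi_h.
\end{align*}
The foundation for both pieces is that, by Remark \ref{remark:numerics-assumptions}~(a), $A_{1,h}$ is uniformly sectorial on $(\iota_h V_h,\Vert\cdot\Vert_{H^1(\Gamma)})$, so the Dunford calculus and the smoothing bounds of Lemma \ref{lemma:analytic-semigroup} are available for $A_{1,h}$ with constants independent of $h$, and over a sector common to $A_1$ and $A_{1,h}$.

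For the spatial term I would represent both semigroups by the Dunford integral over this common sectorial contour, so that the integrand becomes the weighted resolvent difference
\begin{align*}
e^{-zt}\big((z-A_1)^{-1}-(z-A_{1,h})^{-1}\pi_h\big)(\lambda+A_1)^{\rho/2}.
\end{align*}
The key input is \ref{cond:solution-operator}, which at the endpoints $\alpha\in\{0,1/2\}$ (after reorganising via the resolvent identity) controls this difference at orders $h^2$ and $h^1$. Interpolating these two bounds in $\alpha$ yields the full scale $h^\theta$, $\theta\in[0,2]$, while the factor $e^{-zt}$ together with the $|z|$-decay of the resolvent supplies the smoothing weight $t^{-\theta/2+\rho/2}$ and the exponential factor $e^{c(\lambda-\delta)t}$ after scaling the contour. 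The admissible range $\rho\in[-2+\theta,\theta]$ is precisely the constraint under which $\theta-\rho\in[0,2]$, so that the interpolated estimate applies, while $\rho\ge-1$ keeps the negative fractional power meaningful through \ref{cond:A1A2}.

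For the temporal term, $r(z)=(1+z)^{-1}$ is the first-order accurate, $A$-stable rational approximation $e^{-z}\approx r(z)$ underlying backward Euler, and $S_{h,\Delta t}(t)=r(\Delta t A_{1,h})^{n+1}$ on $(t_n,t_{n+1}]$ by \eqref{eq:fully-discrete-semigroup-operator}. Using uniform sectoriality of $A_{1,h}$ and a Dunford-calculus comparison of $e^{-tA_{1,h}}$ with $r(\Delta t A_{1,h})^{n+1}$ --- the standard rational-approximation argument for analytic semigroups --- I would bound $\Vert(S_{1,h}(t)-S_{h,\Delta t}(t))A_{1,h}^{-\rho/2}\pi_h\Vert_H\le C\,\Delta t^{\theta/2} t^{-\theta/2+\rho/2}e^{c(\lambda-\delta)t}$, and then convert $A_{1,h}^{\rho/2}\pi_h$ back to $(\lambda+A_1)^{\rho/2}$ using \ref{cond:fem-bound} together with its interpolated and adjoint analogues. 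Adding the two contributions produces the factor $h^\theta+\Delta t^{\theta/2}$.

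Finally, for the $\min$ I note that \ref{cond:A1A2} gives $(\lambda+A_1)A_2^{-1},\,(\lambda+A_1)^{-1/2}A_2^{1/2}\in L(H)$, and interpolating between these shows $(\lambda+A_1)^{\rho/2}A_2^{-\rho/2}\in L(H)$ on the relevant range of $\rho$; hence $\Vert(\lambda+A_1)^{\rho/2}u\Vert_H\le C\Vert A_2^{\rho/2}u\Vert_H$ and the bound holds with either norm. The main obstacle I anticipate is the interpolation in the spatial term: \ref{cond:solution-operator} is supplied only at $\alpha\in\{0,1/2\}$, so promoting it to the continuous range $\theta\in[0,2]$ with the correct $t$-dependent smoothing weight, uniformly along the sectorial contour, is the delicate bookkeeping. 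An alternative that sidesteps this is to verify that the resolvent estimates of Assumption \ref{assumption:fem} and the uniform sectoriality of Remark \ref{remark:numerics-assumptions}~(a) are exactly the hypotheses of the abstract fully discrete semigroup error theorem in \cite{2024-auestad}, and then invoke that theorem directly.
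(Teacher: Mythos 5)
Your fallback route---checking that Assumptions \ref{assumption:model} and \ref{assumption:fem} supply the hypotheses of the abstract fully discrete error theorems in \cite{2024-auestad} and invoking them---is exactly the paper's proof: the paper deduces the estimate from Theorems 2.14 and 2.24 of \cite{2024-auestad}, noting that Assumptions \ref{assumption:model} and \ref{assumption:fem} imply Assumption 2.1 and conditions (A6)--(A9) there. Your primary route is a self-contained reconstruction of what those cited theorems prove internally: the splitting into the semidiscrete error $S_1(t)-S_{1,h}(t)\pi_h$ and the time-discretization error $(S_{1,h}(t)-S_{h,\Delta t}(t))\pi_h$, the Dunford-calculus/resolvent-identity transfer of \ref{cond:solution-operator} from the single point $z=-\lambda$ to the sectorial contour, interpolation to reach intermediate $\theta\in[0,2]$, and the standard rational-approximation analysis of backward Euler under the uniform sectoriality of Remark \ref{remark:numerics-assumptions}~(a). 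This buys self-containedness at the cost of redoing a substantial amount of known but nontrivial bookkeeping (which you correctly identify as the delicate part); the paper buys brevity by outsourcing precisely that bookkeeping.

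The one place where your sketch differs from the paper in substance, and where I would press you, is the $\min$. You propose to interpolate between the two bounds of \ref{cond:A1A2} to conclude $(\lambda+A_1)^{\rho/2}A_2^{-\rho/2}\in L(H)$ for all intermediate $\rho$. That is an interpolation between the fractional-power scales of \emph{two different, non-selfadjoint} operators; it is not the one-operator moment inequality, and in general it requires extra structure (e.g., bounded imaginary powers together with a Stein-type interpolation argument), which nothing in Assumption \ref{assumption:model} grants you. The paper sidesteps this: it replaces $(\lambda+A_1)^{\alpha}$ by $A_2^{\alpha}$ only at the endpoint exponents $\alpha\in\{-1,0,1/2\}$, where \ref{cond:A1A2} applies directly by composition, e.g.
\begin{align*}
    (S_1(t)-S_{h,\Delta t}(t)\pi_h)A_2^{-1} = \bigl[(S_1(t)-S_{h,\Delta t}(t)\pi_h)(\lambda+A_1)^{-1}\bigr]\bigl[(\lambda+A_1)A_2^{-1}\bigr],
\end{align*}
and only then interpolates the resulting error-operator bounds along the single fractional-power scale of $A_2$. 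Your argument can be repaired in the same way, so this is a fixable over-claim rather than a fatal flaw, but as written the two-operator interpolation step is not justified.
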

\begin{proof}
    This follows by Theorem 2.14 and 2.24 in \cite{2024-auestad}, noting that Assumption \ref{assumption:model} and \ref{assumption:fem} imply Assumption 2.1 and condition (A6)--(A9) in \cite{2024-auestad}. 
    
    We may replace $\lambda + A_1$ by $A_2$ since we can recover the inequality of the lemma by interpolating inequalities related to the operator $(S_1(t) - S_{h,\Delta t}(t) \pi_h ) (\lambda + A_1)^{\alpha}$ with $\alpha \in \{-1, 0, 1/2\}$, (in which case $(\lambda + A_1)^{\alpha}$ may be replaced by the corresponding powers of $A_2$ due to condition \ref{cond:A1A2}). 
\end{proof}
The next lemma describes a smoothing property for the fully discrete semigroup, $S_{h,\Delta t}(\cdot)$, and follows from Lemma \ref{lemma:analytic-semigroup}, \ref{lemma:fully-discrete-semigroup-approximation}, \ref{cond:fem-bound} and \ref{cond:A1A2}.
\begin{lemma}\label{lemma:fully-discrete-semigroup-smoothing}
    Under the conditions of Lemma \ref{lemma:fully-discrete-semigroup-approximation}, there are $c, C, \delta > 0$ such that
    \begin{align*}
        \Vert S_{h,\Delta t}(t) \pi_h A_2^{\alpha} \Vert_{L(H)} \leq C e^{c(\lambda - \delta) t} t^{-\alpha},
    \end{align*}
    for $\alpha \in [0, 1/2]$.
\end{lemma}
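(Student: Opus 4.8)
The plan is to control the operator $S_{h,\Delta t}(t)\pi_h A_2^{\alpha}$ (understood in the extension sense of Section 2, since $A_2^{\alpha}$ is unbounded for $\alpha>0$) by peeling off the continuous semigroup and treating the two resulting pieces separately. Concretely, for $v\in D(A_2^{\alpha})$ I would write
\[
    S_{h,\Delta t}(t)\pi_h A_2^{\alpha} v = \bigl(S_{h,\Delta t}(t)\pi_h - S_1(t)\bigr) A_2^{\alpha} v + S_1(t) A_2^{\alpha} v,
\]
bound each summand by $C e^{c(\lambda-\delta)t} t^{-\alpha}\Vert v\Vert_H$, and then extend to all of $H$ by density of $D(A_2^{\alpha})$ (Lemma \ref{lemma:analytic-semigroup}(c)).

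For the error term I would invoke Lemma \ref{lemma:fully-discrete-semigroup-approximation} with the parameter choice $\theta=0$ and $\rho=-2\alpha$. These are admissible: $\theta=0\in[0,2]$, and for $\alpha\in[0,1/2]$ one has $\rho=-2\alpha\in[-1,0]=[-1,\theta]\cap[-2+\theta,\theta]$. With $\theta=0$ the mesh factor $h^{\theta}+\Delta t^{\theta/2}$ collapses to a constant, the time weight becomes $t^{-\theta/2+\rho/2}=t^{-\alpha}$, and selecting the $A_2$-branch of the minimum in that lemma gives $\Vert A_2^{\rho/2}A_2^{\alpha}v\Vert_H=\Vert A_2^{0}v\Vert_H=\Vert v\Vert_H$ by the composition rule of Lemma \ref{lemma:analytic-semigroup}(b) (which applies since $v\in D(A_2^{\alpha})$ and $\max(-\alpha,\alpha,0)=\alpha$). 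Hence $\Vert(S_{h,\Delta t}(t)\pi_h - S_1(t))A_2^{\alpha}v\Vert_H \leq C e^{c(\lambda-\delta)t} t^{-\alpha}\Vert v\Vert_H$.

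For the continuous-semigroup term I would factor $S_1(t) A_2^{\alpha} = \bigl[S_1(t)(\lambda+A_1)^{\alpha}\bigr]\,\bigl[(\lambda+A_1)^{-\alpha}A_2^{\alpha}\bigr]$. The first factor is bounded with $\Vert S_1(t)(\lambda+A_1)^{\alpha}\Vert_{L(H)}\leq C e^{(\lambda-\delta)t} t^{-\alpha}$ by Lemma \ref{lemma:analytic-semigroup}(a) and (d). The second factor is uniformly bounded: at $\alpha=1/2$ it is exactly the second inequality of \ref{cond:A1A2}, at $\alpha=0$ it is the identity, and the intermediate range follows by interpolation (equivalently, $D((\lambda+A_1)^{\alpha})$ and $D(A_2^{\alpha})$ coincide with equivalent norms, as in Remark \ref{remark:model-assumptions}(c)). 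The corresponding transfer on the discrete side, relating $A_{1,h}$ and $A_2$ through $\pi_h$, is precisely what \ref{cond:fem-bound} supplies, and would be used if one instead routes the argument through a discrete smoothing estimate for $S_{h,\Delta t}(t)(\lambda+A_{1,h})^{\alpha}$ rather than the triangle-inequality split above. Adding the two bounds and absorbing the semigroup constants into a single $c$ gives the claim.

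The parameter admissibility and the interpolation for $(\lambda+A_1)^{-\alpha}A_2^{\alpha}$ are routine. The one point that requires care is the bookkeeping of operator domains: every manipulation must first be performed on the dense set $D(A_2^{\alpha})$, where the fractional-power composition identities of Lemma \ref{lemma:analytic-semigroup}(b),(d) are valid, and only then extended to $H$. I expect this domain tracking, rather than any individual estimate, to be the main thing to handle correctly.
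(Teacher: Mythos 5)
Your proof is correct, but it takes a genuinely different route from the paper's. The paper never splits off the continuous semigroup: it bounds the operator directly through the purely discrete factorization
\begin{align*}
\Vert S_{h,\Delta t}(t)\pi_h A_2^{\alpha}\Vert_{L(H)} &\leq \Vert S_{h,\Delta t}(t)(\lambda+A_{1,h})^{\alpha}\pi_h\Vert_{L(H)}\,\Vert(\lambda+A_{1,h})^{-\alpha}\pi_h(\lambda+A_1)^{\alpha}\Vert_{L(H)} \\
&\qquad\times\Vert(\lambda+A_1)^{-\alpha}A_2^{\alpha}\Vert_{L(H)},
\end{align*}
where the first factor is the discrete smoothing estimate imported from the proofs of Lemmas 2.7 and 2.10 in \cite{2024-auestad}, the second is \ref{cond:fem-bound}, and the third is \ref{cond:A1A2}; this is done for $\alpha\in\{0,1/2\}$ and then extended to $[0,1/2]$ by interpolation. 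You instead pass to the continuous side by the triangle inequality, using Lemma \ref{lemma:fully-discrete-semigroup-approximation} with $\theta=0$, $\rho=-2\alpha$ (an admissible parameter choice, and selecting the $A_2$-branch of the minimum does collapse the data norm to $\Vert v\Vert_H$ as you claim), together with the continuous smoothing of Lemma \ref{lemma:analytic-semigroup}(a),(d) and \ref{cond:A1A2} plus interpolation for the stationary factor $(\lambda+A_1)^{-\alpha}A_2^{\alpha}$. What your route buys: it avoids \ref{cond:fem-bound} and the external discrete-smoothing lemmas entirely, it covers all $\alpha\in[0,1/2]$ at once except for that single stationary factor (where the Heinz--Kato-type interpolation for the m-accretive operators at hand enters---the same level of rigor the paper itself adopts when it says ``by interpolation''), and your domain bookkeeping is handled correctly. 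What the paper's route buys: it is the logically upstream argument, since error estimates like Lemma \ref{lemma:fully-discrete-semigroup-approximation} are themselves typically built on discrete smoothing in \cite{2024-auestad}; your proof is non-circular within this paper (Lemma \ref{lemma:fully-discrete-semigroup-approximation} is quoted as a black box and established before this lemma), but it derives the smoothing property from a deeper result rather than from first principles, so it could not serve as an independent proof if one had to establish Lemma \ref{lemma:fully-discrete-semigroup-approximation} itself.
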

\begin{proof}
    By the proofs of Lemma 2.7 and 2.10 in \cite{2024-auestad}, condition \ref{cond:fem-bound} and \ref{cond:A1A2},
    \begin{align*}
        \Vert S_{h,\Delta t}(t) \pi_h A_2^{\alpha} \Vert_{L(H)} &\leq \Vert S_{h,\Delta t}(t) (\lambda + A_{1,h})^{\alpha} \pi_h \Vert_{L(H)} \Vert (\lambda + A_{1,h})^{-\alpha} \pi_h (\lambda + A_1)^{\alpha} \Vert_{L(H)} \\
        &\qquad \times \Vert (\lambda + A_1)^{-\alpha} A_2^{\alpha} \Vert_{L(H)} \\
        &\leq C e^{c (\lambda-\delta) t} t^{-\alpha},
    \end{align*}
    for $\alpha \in \{0,1/2\}$. Therefore, by interpolation, it holds for $\alpha \in [0,1/2]$.
\end{proof}
The following lemma is an error estimate for the semidiscrete approximation, $S_{2,h}(\cdot)$, of $S_2(\cdot)$. 
\begin{lemma}\label{lemma:new-norm-semidiscrete-error}
    Let $S_{2,h}(\cdot)$ be the analytic semigroup generated by $-A_{2,h}$ on $\iota_h V_h$. Under Assumption \ref{assumption:model} and \ref{assumption:fem} there are $C, \delta > 0$ such that 
    \begin{align*}
        \Vert A_2^{\alpha} (S_2(t) - S_{2,h}(t) \pi_h) \Vert_{L(H)} \leq C e^{-\delta t} t^{-\theta / 2 } h^{\theta - 2 \alpha},
    \end{align*}
    for any $\alpha \in [-1/2, 1/2]$, $\theta \in [2\alpha,2] \cap [0,2+2\alpha]$. 
\end{lemma}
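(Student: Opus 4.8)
The plan is to pass from the elliptic (stationary) error bound \ref{cond:solution-operator} to the parabolic bound through the Dunford representation of the two analytic semigroups, and to recover the full two-parameter range of $(\alpha,\theta)$ by interpolation between a small number of endpoint estimates. Writing $R(z) := (z-A_2)^{-1}$ and $R_h(z) := (z-A_{2,h})^{-1}$, I would start from
\[
    A_2^{\alpha}\big(S_2(t) - S_{2,h}(t)\pi_h\big) = \frac{1}{2\pi i}\int_{\mathcal{C}} e^{-zt}\, A_2^{\alpha}\big(R(z) - R_h(z)\pi_h\big)\,dz,
\]
where $\mathcal{C}$ is a sectorial contour lying in the common resolvent set of $A_2$ and of the family $A_{2,h}$ (uniformly in $h$); such a contour exists because $A_{2,h}$ is uniformly sectorial by \ref{cond:discrete-operator} (Remark \ref{remark:numerics-assumptions} (a)) and $A_2$ is sectorial by \ref{cond:continuity-coercivity}. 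Since both $A_2$ and the uniform family $A_{2,h}$ are coercive, their spectra are bounded away from the imaginary axis, so $\mathcal{C}$ may be taken to cross the real axis at a point $\delta>0$; this is precisely what produces the factor $e^{-\delta t}$ after integration.

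The core estimate to establish is a resolvent error bound of the form
\[
    \big\|A_2^{\alpha}\big(R(z) - R_h(z)\pi_h\big)\big\|_{L(H)} \le C\, h^{\theta - 2\alpha}\,|z|^{\theta/2 - 1}, \qquad z \in \mathcal{C}.
\]
Granting this, the lemma follows (for $\theta>0$) by inserting it into the contour integral and using $\int_{\mathcal{C}}|e^{-zt}|\,|z|^{\theta/2-1}\,|dz| \le C e^{-\delta t} t^{-\theta/2}$, while the endpoint $\theta=0$ is just the uniform boundedness $\|S_2(t)-S_{2,h}(t)\pi_h\|\le Ce^{-\delta t}$ of two uniformly bounded semigroups. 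To obtain the resolvent bound I would prove it at $\alpha\in\{-1/2,0,1/2\}$ and at the two endpoints of the admissible $\theta$-interval, then fill in the interior by interpolation. Concretely, at the top endpoint ($h$-power $h^{2-2\alpha}$, no $|z|$-decay) the bound reduces, after controlling $A_2 R(z)$ by sectoriality, to the stationary estimate $\|A_2^{\alpha}(A_2^{-1}-A_{2,h}^{-1}\pi_h)\|\le Ch^{2-2\alpha}$ of \ref{cond:solution-operator}; at the bottom endpoint it reduces to the uniform sectorial bounds $\|A_2^{\alpha}R(z)\|\le C|z|^{\alpha-1}$ together with $\|\pi_h\|\le C$ and the uniform smoothing of $S_{2,h}$. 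The two $|z|$-regimes are then merged by the elementary inequality $\min(a,b)\le a^{1-s}b^{s}$, and the resulting family is interpolated in $\alpha$ on the complex interpolation scale of $A_2$.

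The genuine obstacle is twofold. First, upgrading the stationary error \ref{cond:solution-operator} (stated only at $z=0$) to a bound holding uniformly along $\mathcal{C}$: because $A_2$, $A_{2,h}$ and $\pi_h$ act on different spaces and do not commute, this requires the resolvent identity $R(z) = -A_2^{-1} + zA_2^{-1}R(z)$ and its discrete counterpart to express $R(z)-R_h(z)\pi_h$ through $A_2^{-1}-A_{2,h}^{-1}\pi_h$, with the intermediate terms absorbed by the uniform sectoriality of both operators. Second, the negative-order case $\alpha=-1/2$ is not covered directly by \ref{cond:solution-operator}; I would handle it by duality, passing to the adjoint operators $A_2^{*}, A_{2,h}^{*}$ (whose sesquilinear forms satisfy the same hypotheses) and transposing the $\alpha=+1/2$ estimate for the adjoint problem. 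Finally, I would note the streamlined alternative used elsewhere in the paper: the whole statement is the space-semidiscrete ($\Delta t\to 0$) specialisation of the abstract analytic-semigroup error theory of \cite{2024-auestad}, so one may instead verify that Assumption \ref{assumption:model} and \ref{assumption:fem} imply the abstract hypotheses there, exactly as in the proof of Lemma \ref{lemma:fully-discrete-semigroup-approximation}, and quote the corresponding semidiscrete estimate, with $\lambda+A_1$ replaced by $A_2$ via \ref{cond:A1A2}.
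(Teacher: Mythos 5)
Your closing paragraph \emph{is} the paper's proof: the authors give no direct argument, but observe (as in the proof of Lemma \ref{lemma:fully-discrete-semigroup-approximation}) that Assumptions \ref{assumption:model} and \ref{assumption:fem} imply the abstract hypotheses of \cite{2024-auestad}, and then quote Lemmas 2.15 and 2.18 of that reference. The main body of your proposal is therefore a genuinely different route: a self-contained reproof of those cited lemmas by the classical resolvent/Dunford-calculus technique. Its skeleton is sound: uniform coercivity of $a_2$ and $a_{2,h}$ lets the contour cross the real axis at some $\delta>0$, which is indeed the source of $e^{-\delta t}$, and your target resolvent bound integrates to the claim. In fact the top endpoint ($\theta = 2$, $\alpha \geq 0$) is cleaner than your sketch suggests if you order the error identity as
\begin{align*}
    R(z)-R_h(z)\pi_h = -\bigl(I-zR(z)\bigr)\bigl(A_2^{-1}-A_{2,h}^{-1}\pi_h\bigr)\bigl(I-zR_h(z)\pi_h\bigr),
\end{align*}
since $A_2^{\alpha}$ commutes with the \emph{continuous} factor $I-zR(z)$ on the left, the middle factor is exactly \ref{cond:solution-operator}, and the discrete factor on the right only needs the uniform bound $\Vert zR_h(z)\pi_h\Vert_{L(H)}\leq C$. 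What the citation buys the authors is the suppression of all this bookkeeping, which was done once, abstractly, in the companion paper; what your route buys is transparency about which hypothesis drives which part of the estimate.

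Two of your steps are thinner than you present them, and they are precisely what \cite{2024-auestad} packages away. First, at the lower endpoint ($\theta=2\alpha$, $\alpha=1/2$) the bound $\Vert A_2^{1/2}R_h(z)\pi_h\Vert_{L(H)}\leq C\vert z\vert^{-1/2}$ does \emph{not} follow from uniform sectoriality and $\Vert\pi_h\Vert_{L(H)}\leq C$ alone: $A_2^{1/2}$ is a power of the continuous operator, and exchanging it for $A_{2,h}^{1/2}$ requires an $h$-uniform compatibility of half powers (an $A_2$-analogue of \ref{cond:fem-bound}; it follows, e.g., when $a_2$, $a_{2,h}$ are symmetric, from \ref{cond:continuity-coercivity}, \ref{cond:discrete-operator} and Lemma \ref{lemma:sfem}(c), cf.\ Remark \ref{remark:numerics-assumptions}(a), but it is an extra ingredient that must be named). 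Second, your duality argument for $\alpha=-1/2$ meets a surface-specific obstruction you do not mention: $\pi_h=\iota_h\mathcal{P}_h\iota_h^{-1}$ is \emph{not} self-adjoint on $L^2(\Gamma)$, because the lift distorts the measure by $\delta_h$; consequently the adjoint of $A_{2,h}^{-1}\pi_h$ (or of $S_{2,h}(t)\pi_h$) is not the same-projection discretization of the adjoint form, unlike in the flat case. The fix is to take adjoints with respect to the $\delta_h$-weighted inner product pulled back from $\Gamma_h$, under which $\pi_h$ is orthogonal, and then transfer back via the $O(h^2)$ estimate \eqref{eq:quotient-scaling-estimate}; this works because the target rate never exceeds $h^2$, but the mismatch must be tracked against the $t$-singular factors. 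Neither point is fatal, so your plan can be completed; they are, however, where the real work lies, which is presumably why the paper delegates it.
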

\begin{proof}
    By similar arguments as in the proof of Lemma \ref{lemma:fully-discrete-semigroup-approximation}, this follows by Lemma 2.15 and 2.18 in \cite{2024-auestad}.
\end{proof}
The following lemma describes the error in our approximation of the stochastic convolution \eqref{eq:mild-form} due to the surface finite element approximation of the additive noise. 
\begin{lemma}\label{lemma:error-stochastic-convolution}
    Suppose the conditions of Theorem \ref{theorem:strong-rate} holds. Then for any $p \geq 2$ and $\theta \in [0, 2\gamma + 1 - d / 2) \cap [0,2]$, there are $C_{p,\theta}, c > 0$ such that
    \begin{align*}
        \Vert \int_0^t S_1(t-s) A_2^{-\gamma} \, dW - \int_0^t S_{h,\Delta t}(t-s) A_{2,h}^{-\gamma} \pi_h \tau_h \, dW \Vert_{L^p(\Omega ; H)} \leq C_{p,\theta} e^{c \lambda t} (h^{\theta} + \Delta t^{\theta / 2}).
    \end{align*}
\end{lemma}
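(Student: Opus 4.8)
The plan is to split the error into three pieces by inserting two intermediate processes, each changing one ingredient of the stochastic convolution at a time. Writing the target difference as
\begin{align*}
\int_0^t S_1(t-s) A_2^{-\gamma}\,dW - \int_0^t S_{h,\Delta t}(t-s) A_{2,h}^{-\gamma}\pi_h\tau_h\,dW,
\end{align*}
I would insert the semidiscrete exact-in-time convolution $\int_0^t S_{2,h}$-type terms, obtaining three differences: (I) the spatial/temporal semigroup error $(S_1(t-s) - S_{h,\Delta t}(t-s)\pi_h)$ applied to $A_2^{-\gamma}$; (II) the fractional-operator approximation error $S_{h,\Delta t}(t-s)(A_2^{-\gamma}-A_{2,h}^{-\gamma}\pi_h)$-type term, capturing the replacement of $A_2^{-\gamma}$ by its discrete analogue; and (III) the noise-scaling error coming from $\tau_h$, i.e. the term $S_{h,\Delta t}(t-s)A_{2,h}^{-\gamma}\pi_h(I-\tau_h)$. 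Since these are all stochastic integrals against the same cylindrical $W$, I would use the Itô isometry in $L^p(\Omega;H)$ (Burkholder--Davis--Gundy for $p>2$) to reduce each piece to a deterministic integral of a Hilbert--Schmidt norm over $s\in[0,t]$.

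For term (I), the plan is to bound the $L_2(H)$-norm of $(S_1(t-s)-S_{h,\Delta t}(t-s)\pi_h)A_2^{-\gamma}$ by factoring $A_2^{-\gamma}=A_2^{-\gamma+\beta}A_2^{-\beta}$, using Lemma \ref{lemma:fully-discrete-semigroup-approximation} (with the $A_2$-version of the estimate) to extract a factor $(t-s)^{-\theta/2+\rho/2}(h^\theta+\Delta t^{\theta/2})$ from the operator-norm part, and using condition \ref{cond:hilbert-schmidt} to control the remaining $A_2^{-\beta}$ in Hilbert--Schmidt norm for $\beta<-d/4$ (here the constraint $\gamma>d/4-1/2$ enters, giving the admissible range of $\theta$). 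For term (III), I would combine Lemma \ref{lemma:fully-discrete-semigroup-smoothing} to handle $S_{h,\Delta t}(t-s)\pi_h A_{2,h}^{-\gamma}$ with the bound $\Vert I-\tau_h\Vert_{L(H)}\leq Ch^2$ from \eqref{eq:quotient-scaling-estimate}; since the $h^2$ factor already dominates $h^\theta$ for $\theta\leq 2$, this term is the easiest, provided the integrable singularity $(t-s)^{-\gamma}$ (or its split version) is handled. Term (II) is the analogue of the stationary Whittle--Matérn finite element error and would use Lemma \ref{lemma:new-norm-semidiscrete-error} together with the resolvent bounds in \ref{cond:solution-operator}, again split through a Hilbert--Schmidt factor via \ref{cond:hilbert-schmidt}.

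The main obstacle I anticipate is the interplay between the singularity of the deterministic kernel at $s=t$ and the Hilbert--Schmidt splitting: each factorization $A_2^{-\gamma}=A_2^{-\gamma+\beta}A_2^{-\beta}$ must simultaneously leave enough negative power on $A_2^{-\beta}$ to guarantee Hilbert--Schmidt summability (needing $\beta<-d/4$) and enough regularity on the semigroup factor so that the time integral $\int_0^t (t-s)^{-(\theta/2-\rho/2)}\,ds$ converges, which forces $\theta<2\gamma+1-d/2$. Balancing these two constraints to obtain exactly the stated exponent range is the delicate bookkeeping step. A secondary technical point is that because the fully discrete semigroup $S_{h,\Delta t}$ is piecewise constant in time, term (I) and term (II) must also absorb the backward-Euler temporal error contributing the $\Delta t^{\theta/2}$ term; this is already packaged into Lemma \ref{lemma:fully-discrete-semigroup-approximation}, so I expect no genuinely new difficulty there, only careful tracking of the $(t-s)^{-\theta/2+\rho/2}$ weights through the isometry. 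Once each of (I), (II), (III) is bounded by $C_{p,\theta}e^{c\lambda t}(h^\theta+\Delta t^{\theta/2})$, the triangle inequality yields the claim. The argument is essentially that of Lemma 4.5 in \cite{2025-auestad}, with the extra operator $\tau_h$ contributing only term (III).
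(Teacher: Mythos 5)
Your overall strategy coincides with the paper's: the paper splits the error into (i) $\int_0^t(S_1(t-s)-S_{h,\Delta t}(t-s)\pi_h)A_2^{-\gamma}\,dW$, handled exactly as your term (I) (by citing Lemma 4.4 of \cite{2025-auestad}), and (ii) $\int_0^t S_{h,\Delta t}(t-s)\pi_h(A_2^{-\gamma}-A_{2,h}^{-\gamma}\pi_h\tau_h)\,dW$, which contains your (II) and (III). The difference is that the paper splits (ii) into the fractional-FEM part and the $\tau_h$ part only \emph{after} applying Burkholder--Davis--Gundy, the smoothing Lemma \ref{lemma:fully-discrete-semigroup-smoothing}, and the Hilbert--Schmidt extraction via \ref{cond:hilbert-schmidt}, so that both sub-pieces then only require operator-norm bounds. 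That ordering is not cosmetic, and it is where your plan has a genuine gap.

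Concretely, for your term (III) you propose only operator-norm ingredients: the smoothing estimate for $S_{h,\Delta t}$ combined with $\Vert I-\tau_h\Vert_{L(H)}\leq Ch^2$. But after Burkholder--Davis--Gundy you must bound the $L_2(H)$-norm of $S_{h,\Delta t}(t-s)A_{2,h}^{-\gamma}\pi_h(I-\tau_h)$, and operator-norm bounds alone do not produce a Hilbert--Schmidt bound. The finite-rank fallback (Lemma \ref{lemma:hilbert-schmidt-finite-rank}) costs a factor $N_h^{1/2}\sim h^{-d/2}$, yielding only $h^{2-d/2}$, which falls short of $h^{\theta}$ whenever $\theta\in(2-d/2,\,2\gamma+1-d/2)$; e.g.\ for $d=2$, $\gamma=1$ and $\theta$ near $2$. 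So (III) is not ``the easiest'' term: it needs the same Hilbert--Schmidt factorization through \ref{cond:hilbert-schmidt} that you invoke for (I) and (II), together with a uniform bound on $\Vert A_2^{d/4-1/2+2\epsilon}A_{2,h}^{-\gamma}\pi_h\Vert_{L(H)}$, which the paper obtains by adding and subtracting $A_2^{-\gamma}$ and reusing its fractional-FEM estimate. Relatedly, in (II) you cite Lemma \ref{lemma:new-norm-semidiscrete-error}, but that lemma controls the semigroup error $S_2(t)-S_{2,h}(t)\pi_h$; the bridge to $A_2^{-\gamma}-A_{2,h}^{-\gamma}\pi_h$ is the integral representation \eqref{eq:fractional-powers}, $A_2^{-\gamma}=\tfrac{1}{\Gamma(\gamma)}\int_0^{\infty}t^{\gamma-1}S_2(t)\,dt$, integrated against the weight $t^{-1+\gamma}$ with $\theta=2\gamma-\epsilon$; this step is the actual workhorse of the paper's estimate and is missing from your outline. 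Two minor slips: Hilbert--Schmidt boundedness of $A_2^{-\beta}$ via \ref{cond:hilbert-schmidt} requires $\beta>d/4$ (exponent $-\beta<-d/4$), not $\beta<-d/4$; and the singularity supplied by Lemma \ref{lemma:fully-discrete-semigroup-smoothing} is $(t-s)^{-\alpha}$ with $\alpha\in[0,1/2]$, not $(t-s)^{-\gamma}$.
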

\begin{proof}
    We decompose the difference as follows
    \begin{align*}
        &\int_0^t S_1(t-s) A_2^{-\gamma} \, dW - \int_0^t S_{h,\Delta t}(t-s) A_{2,h}^{-\gamma} \pi_h \tau_h \, dW \\
        &\qquad = \int_0^t (S_1(t-s) - S_{h,\Delta t}(t-s) \pi_h) A_2^{-\gamma} \, dW \quad (=: (i)) \\
        &\qquad \qquad + \int_0^t S_{h,\Delta t}(t-s) \pi_h (A_2^{-\gamma} - A_{2,h}^{-\gamma} \pi_h \tau_h) \, dW. \quad (=: (ii))
    \end{align*}
    From Lemma 4.4 in \cite{2025-auestad} (see the treatment of the term $(i)$) we have for some $0 < \epsilon < (\gamma - d/4 + 1/2) / 2$
    \begin{align*}
        \Vert (i) \Vert_{L^p(\Omega ; H)} \leq C_{p,\epsilon} e^{c \lambda t} (h^{2 \gamma + 1 - d/2 - 4\epsilon} + \Delta t^{\gamma + 1 / 2 - d / 4 - 2\epsilon}).
    \end{align*}
    
    For the second term, Lemma \ref{lemma:fully-discrete-semigroup-smoothing}, the Burkholder--Davis--Gundy inequality (Theorem 4.36 in \cite{da-prato}), and the properties $\Vert L \Vert_{L_2(H)} = \Vert L^* \Vert_{L_2(H)}$, $\Vert L \Vert_{L(H)} = \Vert L^* \Vert_{L(H)}$ for any $L \in L_2(H)$ with adjoint $L^*$, gives for $0 < \epsilon < (\gamma - d / 4 + 1/2) / 2$
    \begin{align*}
        \Vert (ii) \Vert_{L^p(\Omega ; H)}^2 &\leq C_p \int_0^t \Vert S_{h,\Delta t}(t-s) \pi_h (A_2^{-\gamma} - A_{2,h}^{-\gamma} \pi_h \tau_h) \Vert_{L_2(H)}^2 \, ds \\
        &= C_p \int_0^t \Vert S_{h,\Delta t}(t-s) \pi_h A_2^{1/2 - \epsilon} A_2^{-1/2 + \epsilon} (A_2^{-\gamma} - A_{2,h}^{-\gamma} \pi_h \tau_h) \Vert_{L_2(H)}^2 \, ds \\
        &\leq C_p \int_0^t \Vert S_{h,\Delta t}(t-s) \pi_h A_2^{1/2 - \epsilon} \Vert_{L(H)}^2 \Vert A_2^{-1/2 + \epsilon} (A_2^{-\gamma} - A_{2,h}^{-\gamma} \pi_h \tau_h) \Vert_{L_2(H)}^2 \, ds \\
        &\leq C_{p,\epsilon} e^{2 c \lambda t} \Vert A_2^{-1/2 + \epsilon} (A_2^{-\gamma} - A_{2,h}^{-\gamma} \pi_h \tau_h) \Vert_{L_2(H)}^2 \\
        &= C_{p,\epsilon} e^{2 c \lambda t} \Vert (A_2^{d/4 -1/2 + 2\epsilon} (A_2^{-\gamma} - A_{2,h}^{-\gamma} \pi_h \tau_h) )^* (A_2^{-d/4-\epsilon})^* \Vert_{L_2(H)}^2 \\
        &\leq C_{p,\epsilon} e^{2 c \lambda t} \Vert A_2^{d/4 -1/2 + 2\epsilon} (A_2^{-\gamma} - A_{2,h}^{-\gamma} \pi_h \tau_h) \Vert_{L(H)}^2 \Vert A_2^{-d/4 - \epsilon} \Vert_{L_2(H)}^2 \\
        &\leq C_{p,\epsilon} e^{2 c \lambda t} \Vert A_2^{d/4 -1/2 + 2\epsilon} (A_2^{-\gamma} - A_{2,h}^{-\gamma} \pi_h \tau_h) \Vert_{L(H)}^2,
    \end{align*}
    due to condition \ref{cond:hilbert-schmidt}. Note that,
    \begin{align*}
        &\Vert A_2^{d/4 -1/2 + 2\epsilon} (A_2^{-\gamma} - A_{2,h}^{-\gamma} \pi_h \tau_h) \Vert_{L(H)}^2 \\
        &\quad \leq 2 \Vert A_2^{d/4 -1/2 + 2\epsilon} (A_2^{-\gamma} - A_{2,h}^{-\gamma} \pi_h) \Vert_{L(H)}^2 + 2 \Vert A_2^{d/4 -1/2 + 2\epsilon} A_{2,h}^{-\gamma} \pi_h (I - \tau_h) \Vert_{L(H)}^2.
    \end{align*}
    For the first term above, note that by the definition of the negative fractional power \eqref{eq:fractional-powers} and Lemma \ref{lemma:new-norm-semidiscrete-error},
    \begin{align*}
        \Vert A_2^{d/4 -1/2 + 2\epsilon} (A_2^{-\gamma} - A_{2,h}^{-\gamma} \pi_h) \Vert_{L(H)} &= \Vert A_2^{d/4 - 1/2 + 2 \epsilon} \int_0^{\infty} t^{-1 + \gamma} (S_2(t) - S_{2,h}(t) \pi_h) \, dt \Vert_{L(H)} \\
        &\leq \int_0^{\infty} t^{-1 + \gamma} \Vert A_2^{d/4 - 1/2 + 2 \epsilon} (S_2(t) - S_{2,h}(t) \pi_h) \Vert_{L(H)} \, dt \\
        &\leq C \int_0^{\infty} e^{-\delta t} t^{-1 + \gamma - \theta / 2} h^{\theta - 2(d/4-1/2+2\epsilon)} \, dt \\
        &\leq C_{\epsilon} h^{2 \gamma + 1 - d / 2 - 5 \epsilon},
    \end{align*}
    with $\theta = 2 \gamma - \epsilon$ in the last line, where we used that $A_2^{\alpha}$ is closed to pass it under the integral sign. For the second term we get by Lemma \ref{lemma:new-norm-semidiscrete-error} and \eqref{eq:quotient-scaling-estimate}
    \begin{align*}
        \Vert A_2^{d/4 -1/2 + 2\epsilon} A_{2,h}^{-\gamma} \pi_h (I - \tau_h) \Vert_{L(H)} &\leq \Vert A_2^{d/4 - 1/2 + 2\epsilon} (A_{2,h}^{-\gamma} \pi_h - A_2^{-\gamma} + A_2^{-\gamma}) \Vert_{L(H)} \Vert I - \tau_h \Vert_{L(H)} \\
        &\leq (\Vert A_2^{-\gamma + d / 4 - 1/2 + 2\epsilon} \Vert_{L(H)} \\
        &\qquad + \Vert A_2^{d/4 - 1/2 + 2\epsilon} (A_2^{-\gamma} - A_{2,h}^{-\gamma} \pi_h) \Vert_{L(H)}) C h^2.
    \end{align*}
    The first term above is bounded since $\gamma > d/4 - 1/2 + 2\epsilon$. The second term is bounded as shown above. Combining the estimates above finishes the estimate of $(ii)$ and the proof.
\end{proof}

We are now ready to prove Theorem \ref{theorem:strong-rate}.
\begin{proof}[Proof of Theorem \ref{theorem:strong-rate}]
Our approximate mild solution \eqref{eq:fully-discrete-scheme} can be extended from discrete times to all times $t \geq 0$ as
\begin{align}\label{eq:fully-discrete-2}
    u_{h,\Delta t}(t) := S_{h,\Delta t}(t) \pi_h \xi + \int_0^t S_{h,\Delta t}(t-s) Q_k^{-\gamma}(A_{2,h}) \pi_h \tau_h \, dW.
\end{align}
We decompose the error as follows
\begin{align*}
    u(t) - u_{h,\Delta t}(t) &= (S_1(t) - S_{h,\Delta t}(t) \pi_h) \xi \\
    &\qquad+ \bigg(\int_0^t S_1(t-s) A_2^{-\gamma} \, dW - \int_0^t S_{h,\Delta t}(t-s) A_{2,h}^{-\gamma} \pi_h \tau_h \, dW \bigg) \\
    &\qquad+ \bigg(\int_0^t S_{h,\Delta t}(t-s) A_{2,h}^{-\gamma} \pi_h \tau_h \, dW - \int_0^t S_{h,\Delta t}(t-s) Q_k^{-\gamma}(A_{2,h}) \pi_h \tau_h \, dW \bigg) \\
    &=: (i) + (ii) + (iii).
\end{align*}
We have,
\begin{align*}
    \Vert (i) \Vert_{L^p(\Omega ; H)} \leq C e^{c \lambda t} t^{-\theta/2 + \rho/2} (h^{\theta} + \Delta t^{\theta/2}) \Vert (\lambda + A_1)^{\rho / 2} \xi \Vert_{L^p(\Omega ; H)},
\end{align*}
for any $\theta \in [0,2]$ and $\rho \in [-1, \theta] \cap [-2 + \theta, \theta]$, by Lemma \ref{lemma:fully-discrete-semigroup-approximation}. For $(ii)$, we use Lemma \ref{lemma:error-stochastic-convolution}. 

Finally for $(iii)$, we have using the Burkholder--Davis--Gundy inequality (Theorem 4.36 in \cite{da-prato}), Lemma \ref{lemma:fully-discrete-semigroup-smoothing}, \ref{lemma:A_h-quadrature} and \ref{lemma:hilbert-schmidt-finite-rank}
\begin{align*}
    &\Vert \int_0^t S_{h,\Delta t}(t-s) (A_{2,h}^{-\gamma} - Q_k^{-\gamma}(A_{2,h})) \pi_h \tau_h \, dW \Vert_{L^p(\Omega; H)}^p \\
    &\qquad\leq C_p \bigg( \int_0^t \Vert S_{h,\Delta t}(t-s) (A_{2,h}^{-\gamma} - Q_k^{-\gamma}(A_{2,h})) \pi_h \tau_h \Vert_{L_2(H)}^2 \, ds \bigg)^{p/2} \\
    &\qquad\leq C_p \bigg( \int_0^t e^{2c(\lambda - \delta) s} \Vert (A_{2,h}^{-\gamma} - Q_k^{-\gamma}(A_{2,h})) \pi_h \tau_h \Vert_{L_2(H)}^2 \, ds \bigg)^{p/2} \\
    &\qquad\leq C_p e^{p c \lambda t} \bigg( \int_0^t e^{-2c \delta s} \Vert (A_{2,h}^{-\gamma} - Q_k^{-\gamma}(A_{2,h})) \Vert_{L(H)}^2 \Vert \pi_h \tau_h \Vert_{L_2(H)}^2 \, ds \bigg)^{p/2} \\
    &\qquad\leq C_p e^{p c \lambda t} (e^{-\frac{\pi^2}{2k}} N_h^{1/2})^p, 
\end{align*} 
Using the bound on $k$ in terms of $h$, the bound $N_h \leq C h^{-d}$ which follows from \ref{eq:quasi-uniform}, this term is bounded by $C (h^{2\gamma + 1 - d/2})^p$. If $\gamma = 1$, this term vanishes.

In total, we get,
\begin{align*}
    \Vert u(t) - u_{h,\Delta t}(t) \Vert_{L^p(\Omega ; H)} &\leq C_{p,\theta} e^{c \lambda t} (h^{\theta} + \Delta t^{\theta / 2}) (1 + t^{-\theta / 2 + \rho / 2} \Vert (\lambda + A_1)^{\rho / 2} \xi \Vert_{L^p(\Omega ; H)}),
\end{align*}
for any $\theta \in [0, 2\gamma + 1 - d/2) \cap [0,2]$, and $\rho \in [-1, \theta] \cap [-2 + \theta, \theta]$. 
\end{proof}

\section{Numerical examples and verification of convergence rate}

We numerically verify the convergence rates obtained in Theorem \ref{theorem:strong-rate} and Corollary \ref{cor:pathwise-rate} for the model,
\begin{align}\label{eq:example-model}
    du(t) = \Delta_{\Gamma} u(t) \, dt + (I - \Delta_{\Gamma})^{-\gamma} \, dW(t), \quad u(0) = 0,
\end{align}
where $\Delta_{\Gamma}$ is related to the bilinear form $\int_{\Gamma} \nabla_{\Gamma} u \cdot \nabla_{\Gamma} v \, d\sigma$, on $V = H^1(\Gamma)$. In Figure \ref{fig:rates}, we consider the domains $\Gamma = \mathbb{S}^1$ and $\Gamma = \mathbb{S}^2$. We approximate the relative pathwise error at time $t = 1$ by
\begin{align}\label{eq:relative-pathwise-error}
    e_{h,\Delta t} := \frac{\Vert u_{h,\Delta t}(1)-u_{\tilde{h},\widetilde{\Delta t}}(1) \Vert_H}{\Vert u_{\tilde{h},\widetilde{\Delta t}}(1)\Vert_H},
\end{align}
where a coarse approximation, $u_{h, \Delta t}(1)$, is compared to a reference solution, $u_{\tilde{h}, \widetilde{\Delta t}}(1)$, based on a finer space and time resolution, $\tilde{h}$ and $\widetilde{\Delta t}$. The quadrature resolution $k$ is fixed and equal to $0.5$ in all experiments.

To compute $u_{h, \Delta t}$ for different resolutions $h, \Delta t$, using the same Wiener process, $W$, we note that (see, e.g., Lemma \ref{lemma:auxiliary-basis-coef})
\begin{align*}
    \widetilde{\Delta t}^{1/2} M_{\tilde{h}}^{1/2} \varrho_{\tilde{h}}^n := \begin{pmatrix}
        (\tilde{\varphi}_1, \iota_{\tilde{h}}^{-1} \pi_{\tilde{h}} \tau_{\tilde{h}} (W(\tilde{t}_{n+1}) - W(\tilde{t}_n)))_{L^2(\Gamma_{\tilde{h}})} \\
        \vdots \\
        (\tilde{\varphi}_{N_{\tilde{h}}}, \iota_{\tilde{h}}^{-1} \pi_{\tilde{h}} \tau_{\tilde{h}} (W(\tilde{t}_{n+1}) - W(\tilde{t}_n)))_{L^2(\Gamma_{\tilde{h}})}
    \end{pmatrix},
\end{align*}
where $\tilde{t}_n = n \widetilde{\Delta t}$ and $\tilde{\varphi}_j$ $j = 1, \dots, N_{\tilde{h}}$ are the nodal basis functions based on the fine resolution mesh. Thus, for a suitable coarser time resolution, $\Delta t$ with $t_n := n \Delta t$, we can construct increments $(\tilde{\varphi}_j, \iota_{\tilde{h}}^{-1} \pi_{\tilde{h}} \tau_{\tilde{h}} (W(t_{n+1}) - W(t_n)))_{L^2(\Gamma_{\tilde{h}})}$ by summing increments of $\varrho_{\tilde{h}}^n$ based on the fine time resolution. Moreover, for a suitable coarser space resolution $h$, we can approximate $\varrho_h^n$ by
\begin{align*}
    M_h^{1/2} \varrho_h^n \approx A M_{\tilde{h}}^{1/2} \varrho_{\tilde{h}}^n,
\end{align*}
where $A_{ij} = \varphi_i(\tilde{x}_j)$, $\tilde{x}_j$ is the vertex corresponding to $\tilde{\varphi}_j$ projected down onto the coarser simplex where $\varphi_i$ is defined.

Approximations $\hat{e}_{h,\Delta t}$ of \eqref{eq:relative-pathwise-error} are computed as follows: we simulate a realization of the reference and coarse solution, $u_{\tilde{h},\widetilde{\Delta t}}(1) = \sum_{j = 1}^{N_{\tilde{h}}} \tilde{\alpha}_j \tilde{\varphi}_j$ and $u_{h,\Delta t}(1) = \sum_{j = 1}^{N_h}\alpha_j \varphi_j$, respectively. Then,
\begin{align}\label{eq:relative-pathwise-error-approxmiation}
    \hat{e}_{h,\Delta t}^2 := \frac{ (A^T\alpha - \tilde{\alpha})^T M_{\tilde{h}}(A^T\alpha-\tilde{\alpha})}{ \tilde{\alpha}^T M_{\tilde{h}} \tilde{\alpha}}.
\end{align}

\begin{example}\label{ex:1d-rates}
We consider \eqref{eq:example-model} with $\Gamma = \mathbb{S}^1$, $k = 0.5$, $\widetilde{\Delta t} = 2^{-20}$ and $\tilde{h} = 2^{-11}$. Coarser solutions are computed at space resolutions $h \in \{2^{-2}, \dots, 2^{-7}\}$ and time resolutions $\Delta t \in \{2^{-7}, \dots, 2^{-15}\}$. The first row in Figure \ref{fig:rates} shows the numerical convergence rates for $\gamma = 0$, $\gamma = 0.25$, $\gamma = 0.5$ and $\gamma = 0.75$, together with corresponding theoretical rates from Corollary \ref{cor:pathwise-rate}.
\end{example}

\begin{example}\label{ex:2d-rates}
We consider \eqref{eq:example-model} with $\Gamma = \mathbb{S}^2$, $k = 0.5$, $\widetilde{\Delta t} = 2^{-15}$ and $\tilde{h} = 2^{-5.5}$. Coarser solutions are computed at space resolutions $h \in \{2^{-0.5}, \dots, 2^{-3.5}\}$, and time resolutions $\Delta t \in \{2^{-5}, \dots, 2^{-9}\}$. The second row in Figure \ref{fig:rates} shows the numerical convergence rates for $\gamma = 0.25$, $\gamma = 0.5$, $\gamma = 0.75$ and $\gamma = 1$, together with corresponding theoretical rates from Theorem \ref{theorem:strong-rate}.
\end{example}

In both Example \ref{ex:1d-rates} and \ref{ex:2d-rates} we speed up computations by taking advantage of the fact that $(I-\Delta_{\Gamma})^{-\gamma}$ and $\Delta_{\Gamma}$ commute so that $(I-\Delta_{\Gamma})^{-\gamma}$ only needs to be applied at the final time.

\begin{figure}
    \centering
    \subcaptionbox{Dashed lines show rates $\frac{1}{2}, 1, \frac{3}{2}, 2$.}{
        \includegraphics[width = 6.5cm]{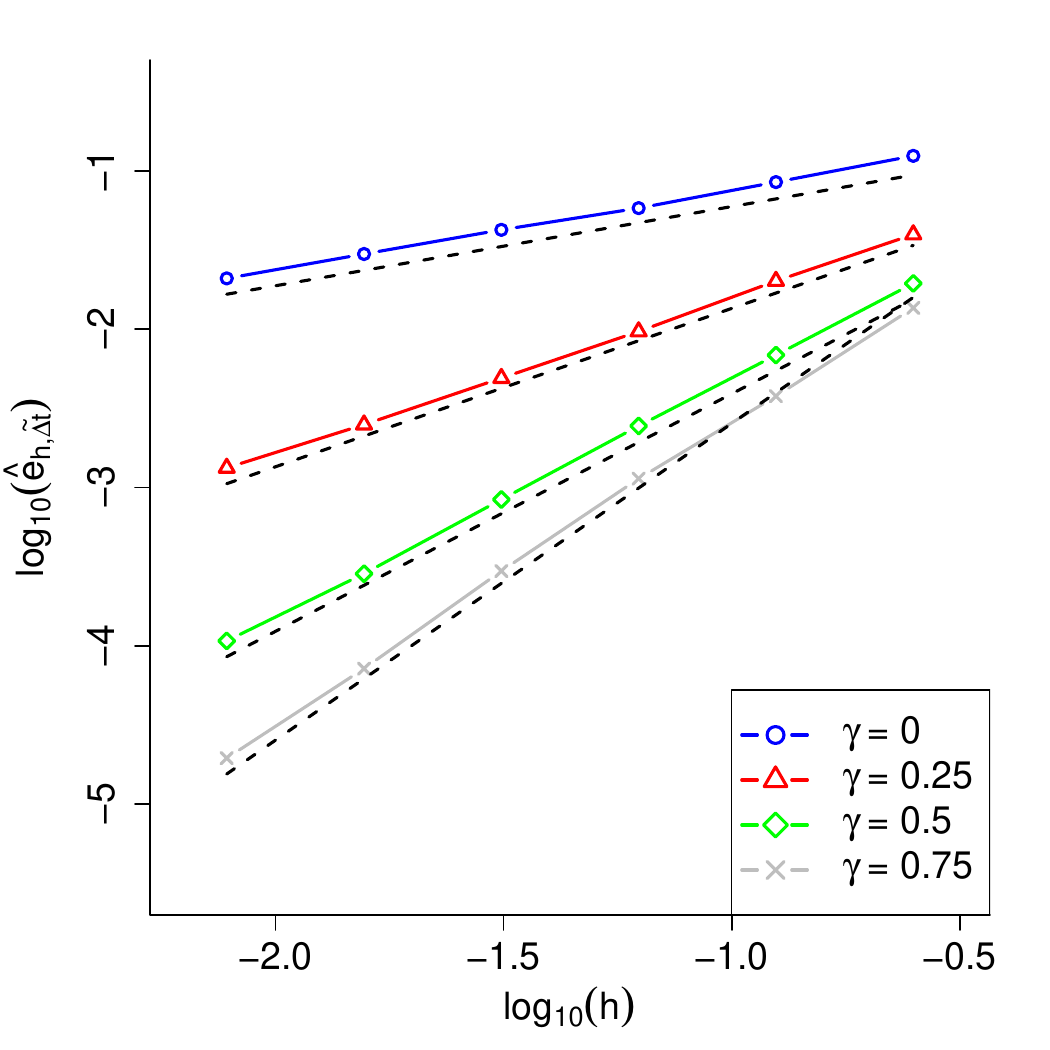}
    }
    \subcaptionbox{Dashed lines show rates $\frac{1}{4},\frac{1}{2},\frac{3}{4}, 1$}{
        \includegraphics[width = 6.5cm]{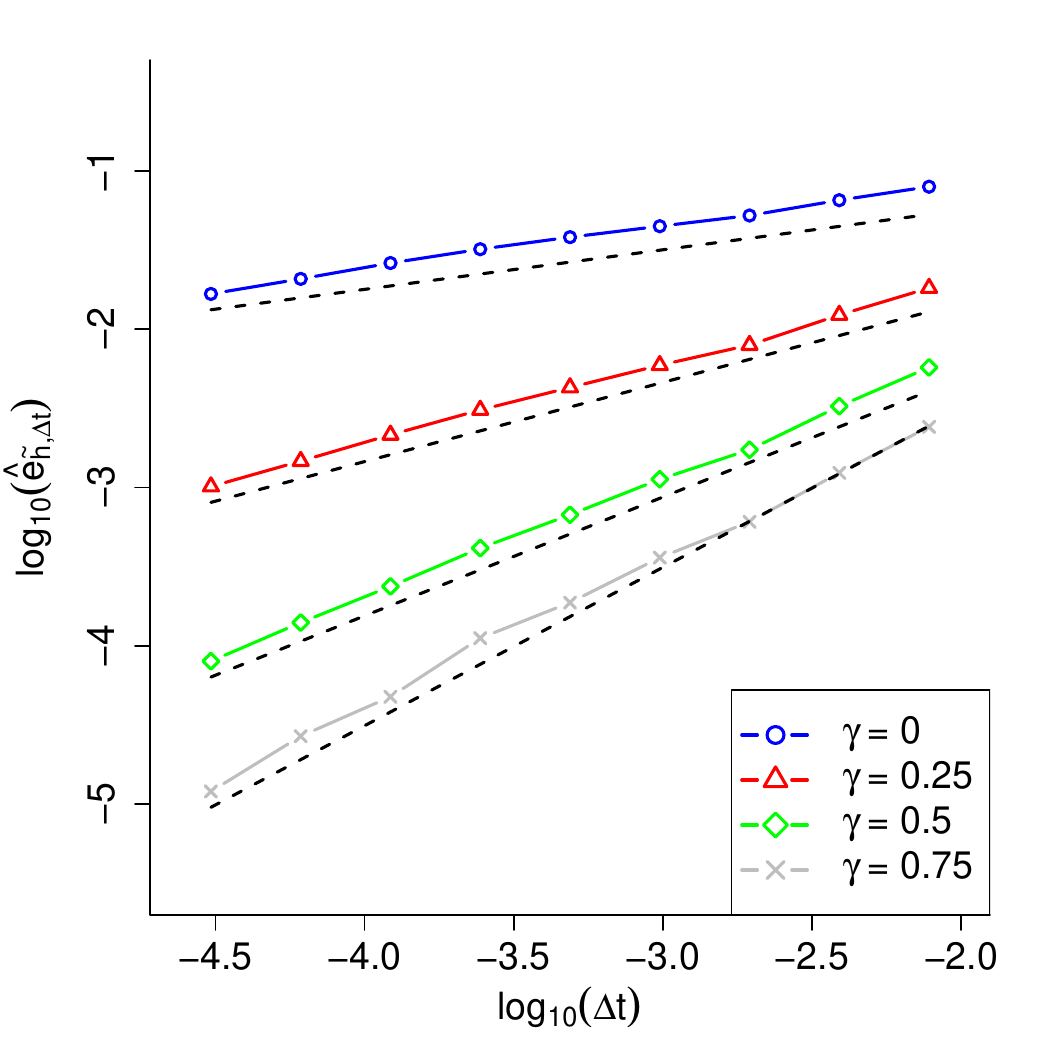}
    } \\
    \subcaptionbox{Dashed lines show rates $\frac{1}{2},1,\frac{3}{2},2$}{
        \includegraphics[width = 6.5cm]{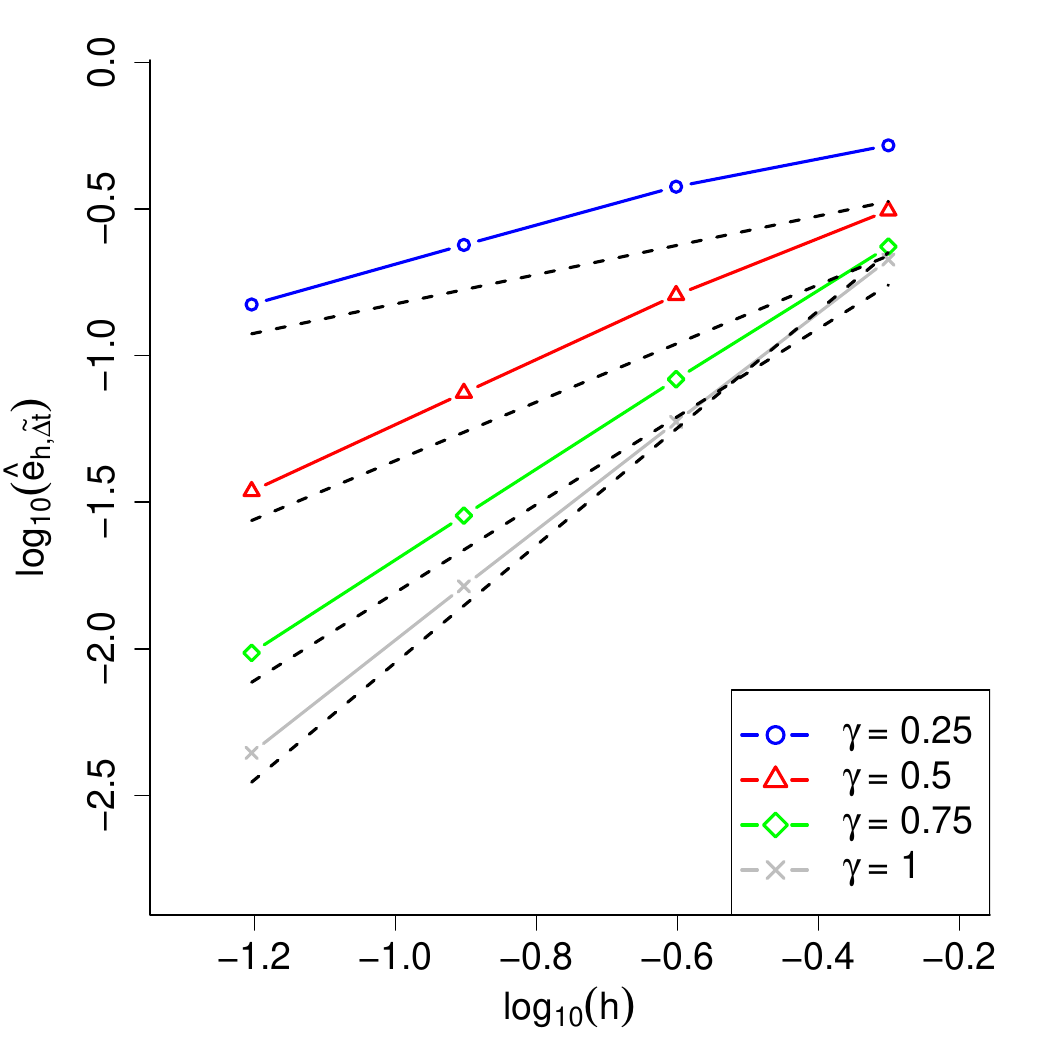}
    }
    \subcaptionbox{Dashed lines show rates $\frac{1}{4},\frac{1}{2},\frac{3}{4},1$.}{
        \includegraphics[width = 6.5cm]{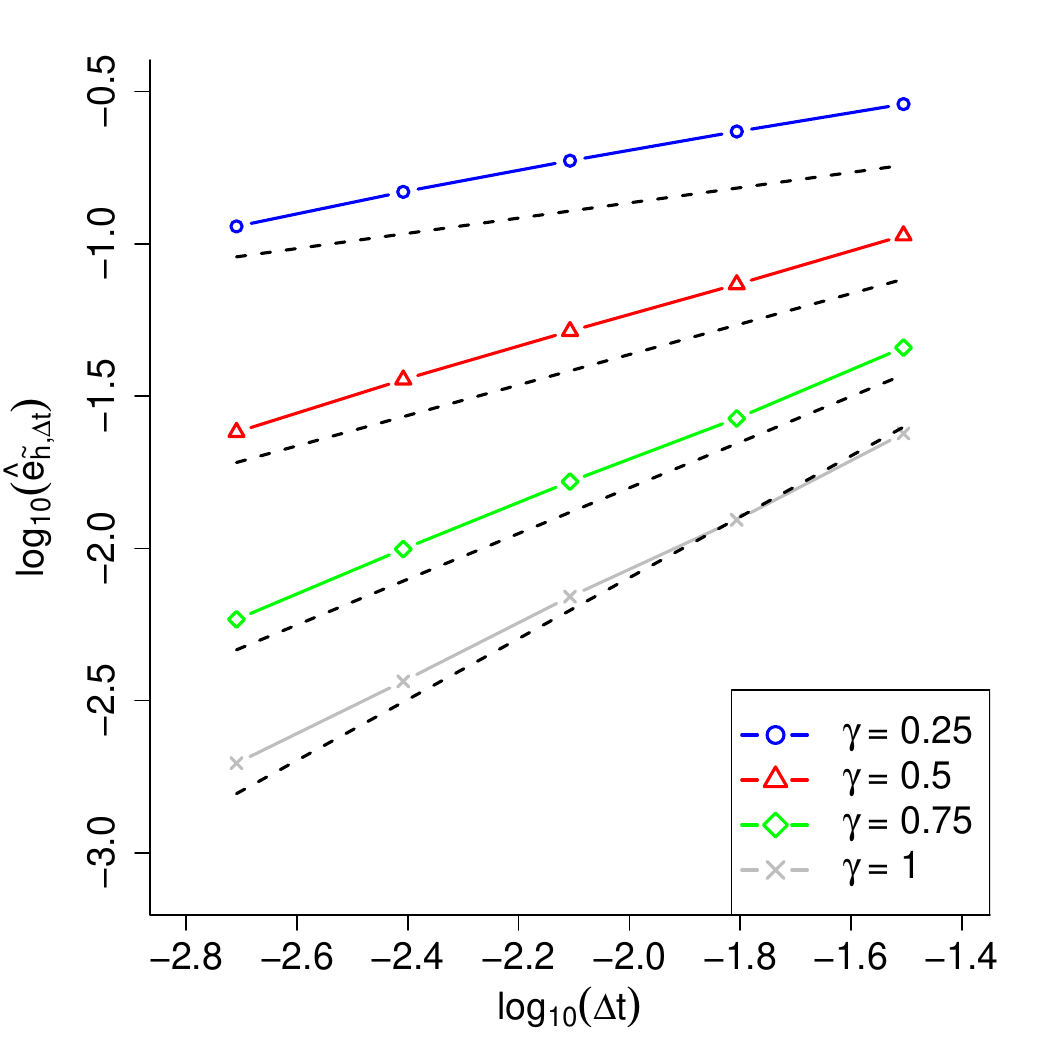}
    }
    \caption{Relative errors. Row 1 and 2 corresponds to Example 1 and 2, while Column 1 and 2 show rates in space and time, respectively. The dashed lines show corresponding theoretical rates. \label{fig:rates}} 
\end{figure}

\begin{example}\label{ex:motivation}
In our final example we consider two models with spatially varying coefficients. For both models, the parameters are $\alpha_2 = 0$, $A_2 = I$, $b_2 = 0$, while $\alpha_1 = 1$, $\mathcal{A}_2 = I + 5 v v^T$ with
\begin{align*}
    v = \cos^2(\pi x_3 / 2) (I - \nu \nu^T) (x_2, -x_1, 0), \quad \text{and} \quad b_1 := -(I - \nu \nu^T) (0, 0, x_3 / 2),
\end{align*}
where $\nu(x)$ is the unit surface normal of $\Gamma$ at $x$. We use the discrete coefficients $\mathcal{A}_{j,h} = (I - \nu_h \nu_h^T) \mathcal{A}_j$, $b_{j,h} = (I - \nu_h \nu_h^T) b_j$, evaluated at $\Gamma_h$, with $\nu_h$ being the unit normal for the discrete surface. (It is worth noting that we have not verified all conditions of Assumption \ref{assumption:model} and \ref{assumption:fem} for this particular choice of coefficients, but we still include it as an example). 

We consider $\Gamma_1 := \mathbb{S}^2$ and $\Gamma_2 := f (\mathbb{S}^2)$, with $f(x_1, x_2, x_3) := (1 - 0.5 \cos^2(\pi x_3)) (x_1, x_2, 0) + (0,0,x_3)$. In the first row of Figure \ref{fig:motivation} a realization of $u(1)$ using domain $\Gamma_1$ is shown for different values of $\gamma$. In the second row of Figure \ref{fig:motivation} a realization of $u(1)$ using domain $\Gamma_2$ is shown for the same values of $\gamma$. 
\end{example}

\subsection*{Acknowledgements}

The research of ØSA and GAF was supported by the project IMod---Partial differential equations, statistics and data: an interdisciplinary approach to data-based modelling, project number 325114, from the Research Council of Norway. AL's work was supported in part by the Swedish Research Council (VR) through grant no.\ 2020-04170, by the Wallenberg AI, Autonomous Systems and Software Program (WASP) funded by the Knut and Alice Wallenberg Foundation, by the Chalmers AI Research Centre (CHAIR), and by the European Union (ERC, StochMan, 101088589). Views and opinions expressed are however those of the author(s) only and do not necessarily reflect those of the European Union or the European Research Council Executive Agency. Neither the European Union nor the granting authority can be held responsible for them.

\addcontentsline{toc}{section}{References}
\bibliographystyle{abbrv}
\bibliography{refs.bib}

\appendix
\section{Derivation of (\ref{eq:scheme-basis-coefficients})}\label{app:derivation-of-system}

In order to rewrite \eqref{eq:fully-discrete-scheme} as a system of equations for the coefficients of $\iota_h^{-1} u_{h,\Delta t}$ in the nodal basis, the following two lemmas are key. 
\begin{lemma}\label{lemma:hilbert-schmidt-finite-rank}
    Let $B_h \in L(H, \iota_h V_h)$. Then,
    \begin{align*}
        \Vert B_h \Vert_{L_2(H)} \leq \Vert B_h \Vert_{L(H)} N_h^{1/2}.
    \end{align*}
\end{lemma}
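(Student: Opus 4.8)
The plan is to exploit the fact that $B_h$ has finite rank. Since the range of $B_h$ is contained in $\iota_h V_h$, which is $N_h$-dimensional by construction, $B_h$ is an operator of rank at most $N_h$. The obstruction to a naive estimate is that the Hilbert--Schmidt norm $\Vert B_h \Vert_{L_2(H)}^2 = \sum_{j} \Vert B_h e_j \Vert_H^2$, computed over an arbitrary orthonormal basis $\{e_j\}_{j \geq 1}$ of $H$, is an infinite sum whose terms are each bounded by $\Vert B_h \Vert_{L(H)}^2$; summing these naively yields no finite bound. The resolution is to pass to the adjoint, which confines the sum to at most $N_h$ genuinely nonzero terms.

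Concretely, I would first record the identities $\Vert B_h \Vert_{L_2(H)} = \Vert B_h^* \Vert_{L_2(H)}$ and $\Vert B_h \Vert_{L(H)} = \Vert B_h^* \Vert_{L(H)}$, which are the properties of Hilbert--Schmidt and bounded operators already invoked elsewhere in the paper. Next I would show that $B_h^*$ annihilates the orthogonal complement of $\iota_h V_h$: for any $u \in (\iota_h V_h)^{\perp}$ and any $v \in H$,
\begin{align*}
    (B_h^* u, v)_H = (u, B_h v)_H = 0,
\end{align*}
since $B_h v \in \iota_h V_h$; hence $B_h^* u = 0$.

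Then I would choose an orthonormal basis $\{\phi_j\}_{j \geq 1}$ of $H$ adapted to the splitting $H = \iota_h V_h \oplus (\iota_h V_h)^{\perp}$, so that $\{\phi_j\}_{j = 1}^{N_h}$ is an orthonormal basis of $\iota_h V_h$ and the remaining $\phi_j$ lie in $(\iota_h V_h)^{\perp}$. Computing the Hilbert--Schmidt norm in this basis and using the previous step to discard all but the first $N_h$ terms,
\begin{align*}
    \Vert B_h \Vert_{L_2(H)}^2 = \Vert B_h^* \Vert_{L_2(H)}^2 = \sum_{j = 1}^{N_h} \Vert B_h^* \phi_j \Vert_H^2 \leq N_h \Vert B_h^* \Vert_{L(H)}^2 = N_h \Vert B_h \Vert_{L(H)}^2,
\end{align*}
where the inequality uses $\Vert B_h^* \phi_j \Vert_H \leq \Vert B_h^* \Vert_{L(H)} \Vert \phi_j \Vert_H = \Vert B_h^* \Vert_{L(H)}$. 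Taking square roots gives the claim.

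I do not anticipate a genuine obstacle here; the only point requiring care is selecting a basis adapted to $\iota_h V_h$, which is precisely what reduces the infinite Hilbert--Schmidt sum to $N_h$ terms. Equivalently, one could phrase the argument through the singular value decomposition of $B_h$: since $\mathrm{rank}(B_h) \leq N_h$, at most $N_h$ of its singular values $\sigma_1 \geq \sigma_2 \geq \cdots$ are nonzero, and then $\Vert B_h \Vert_{L_2(H)}^2 = \sum_{j} \sigma_j^2 \leq N_h \sigma_1^2 = N_h \Vert B_h \Vert_{L(H)}^2$, giving the same bound.
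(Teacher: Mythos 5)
Your proof is correct and follows essentially the same route as the paper: both arguments reduce the Hilbert--Schmidt sum to the adjoint $B_h^*$ evaluated on an orthonormal basis of the $N_h$-dimensional range $\iota_h V_h$, bound each of the $N_h$ terms by $\Vert B_h^* \Vert_{L(H)} = \Vert B_h \Vert_{L(H)}$, and conclude. The paper reaches the identity $\Vert B_h \Vert_{L_2(H)}^2 = \sum_{k=1}^{N_h} \Vert B_h^* e_{k,h} \Vert_H^2$ by a direct Parseval-and-sum-swap computation rather than by citing $\Vert B_h \Vert_{L_2(H)} = \Vert B_h^* \Vert_{L_2(H)}$ with an adapted basis, but this is a cosmetic difference, not a different method.
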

\begin{proof}
    Let $\{ e_j \}_{j=1}^{\infty}$ be an $H$-orthonormal basis of $H$, and let $\{ e_{j,h} \}_{j=1}^{N_h}$ be an $H$-orthonormal basis of $\iota_h V_h$. We have,
    \begin{align*}
        \Vert B_h \Vert_{L_2(H)}^2 &= \sum_{j=1}^{\infty} (B_h e_j, B_h e_j)_H = \sum_{j=1}^{\infty} \sum_{k = 1}^{N_h} (B_h e_j, e_{k,h})_H (B_h e_j, e_{k,h})_H = \sum_{k=1}^{N_h} \Vert B_h^* e_{k,h} \Vert_H^2,
    \end{align*}
    and using that $\Vert B_h^* \Vert_{L(H)} = \Vert B_h \Vert_{L(H)}$ gives the estimate. 
\end{proof}

\begin{lemma}\label{lemma:discrete-cylindrical-wiener-process}
    Let $W$ be a cylindrical Wiener process on $H$. Then $\iota_h^{-1} \pi_h \tau_h W$ is a cylindrical Wiener process on $V_h$, in the sense that,
    \begin{align*}
        \iota_h^{-1} \pi_h \tau_h W (t) = \sum_{j = 1}^{N_h} \beta_j(t) e_{j,h}, \quad P\text{-a.s.}
    \end{align*}
    where $\beta_j$ are independent scalar Brownian motions, and $\{ e_{j,h} \}_{j = 1}^{N_h}$ is an $L^2(\Gamma_h)$-orthonormal basis of $V_h$. 
\end{lemma}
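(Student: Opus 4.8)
The plan is to identify the operator $B_h := \iota_h^{-1}\pi_h\tau_h \in L(H,V_h)$, where $V_h$ carries the $L^2(\Gamma_h)$ inner product, and to reduce the whole statement to the single identity $B_h B_h^* = \mathrm{id}_{V_h}$, with $B_h^* : V_h \to H$ the Hilbert space adjoint. Since $V_h$ is $N_h$-dimensional, $B_h$ has finite rank and is in particular Hilbert--Schmidt, so $B_h W$ is a well-defined $V_h$-valued process with continuous paths, independent increments and centered Gaussian marginals (cf.\ the construction of stochastic integrals against a cylindrical Wiener process in \cite{da-prato}). Granting $B_h B_h^* = \mathrm{id}_{V_h}$, I would then set $\beta_j(t) := (B_h W(t), e_{j,h})_{L^2(\Gamma_h)}$ and expand the $V_h$-valued random variable $B_h W(t)$ in the orthonormal basis $\{e_{j,h}\}_{j=1}^{N_h}$ to obtain the claimed representation $\iota_h^{-1}\pi_h\tau_h W(t) = \sum_{j=1}^{N_h}\beta_j(t) e_{j,h}$.

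First I would unwind the three operators into a single formula. Using $\tau_h f = (\iota_h\delta_h^{1/2}) f$, the definition $\pi_h = \iota_h\mathcal{P}_h\iota_h^{-1}$, and multiplicativity of the lift ($\iota_h(gh) = (\iota_h g)(\iota_h h)$), one gets $B_h f = \mathcal{P}_h(\delta_h^{1/2}\,\iota_h^{-1} f)$ for $f \in H$. To compute the adjoint I would use the defining property of the quotient $\delta_h$, which yields the change-of-variables identity $\int_{\Gamma_h} g\,\delta_h\,d\sigma_h = \int_\Gamma \iota_h g\,d\sigma$. For $u \in V_h$ and $f \in H$, using that $\mathcal{P}_h$ is the $L^2(\Gamma_h)$-orthogonal projection and writing $\delta_h^{1/2}(\iota_h^{-1}f)\overline{u} = \delta_h\,[\delta_h^{-1/2}(\iota_h^{-1}f)\overline{u}]$, this gives
\begin{align*}
    (B_h f, u)_{L^2(\Gamma_h)} = \int_{\Gamma_h}\delta_h^{1/2}(\iota_h^{-1}f)\,\overline{u}\,d\sigma_h = \int_{\Gamma}f\,\overline{(\iota_h\delta_h^{-1/2})(\iota_h u)}\,d\sigma,
\end{align*}
so that $B_h^* u = (\iota_h\delta_h^{-1/2})(\iota_h u)$. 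Feeding this back into $B_h$ and again using multiplicativity of the lift, $\iota_h^{-1}B_h^* u = \delta_h^{-1/2} u$, hence $B_h B_h^* u = \mathcal{P}_h(\delta_h^{1/2}\delta_h^{-1/2}u) = \mathcal{P}_h u = u$ since $u \in V_h$. This establishes $B_h B_h^* = \mathrm{id}_{V_h}$, and is exactly where the scaling factor $\delta_h^{1/2}$ built into $\tau_h$ does its work, absorbing the mismatch between the measures $\sigma$ and $\sigma_h$.

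With the identity in hand, the processes $\beta_j$ are jointly Gaussian, vanish at $t=0$, have continuous paths, and satisfy
\begin{align*}
    E[\beta_j(t)\beta_k(s)] = (t\wedge s)(B_h^* e_{j,h}, B_h^* e_{k,h})_H = (t\wedge s)(B_h B_h^* e_{j,h}, e_{k,h})_{L^2(\Gamma_h)} = (t\wedge s)\delta_{jk},
\end{align*}
so they are independent standard scalar Brownian motions, and the series representation is just the finite basis expansion noted above. I expect the only genuine obstacle to be the adjoint computation in the second step: one must track carefully the interplay of the two surface measures, the lift $\iota_h$ and its inverse, the projection $\mathcal{P}_h$, and the scaling by $\delta_h^{1/2}$; once $B_h^*$ is correctly identified, the remaining arguments are standard finite-dimensional Gaussian-process bookkeeping.
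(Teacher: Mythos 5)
Your proposal is correct and takes essentially the same route as the paper: your adjoint computation $B_h^* u = (\iota_h\delta_h^{-1/2})(\iota_h u)$ is precisely the paper's chain of identities $(\iota_h^{-1}\pi_h\tau_h W(t), e_{j,h})_{L^2(\Gamma_h)} = (W(t), (\iota_h\delta_h^{-1/2})\iota_h e_{j,h})_H$, and your covariance identity $B_h B_h^* = \mathrm{id}_{V_h}$ is equivalent to the paper's closing observation that $\{(\iota_h\delta_h^{-1/2})\iota_h e_{j,h}\}_{j=1}^{N_h}$ is an $H$-orthonormal system. Both arguments also justify $P$-a.s.\ membership of $\iota_h^{-1}\pi_h\tau_h W(t)$ in $V_h$ via the finite-rank (hence Hilbert--Schmidt) property and finish with the same finite-dimensional Gaussian bookkeeping.
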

\begin{proof}
    We have that $\iota_h^{-1} \pi_h \tau_h \in L_2(H, V_h)$ (where $V_h$ has the $L^2(\Gamma_h)$-norm) since it has finite rank. It follows that $\iota_h^{-1} \pi_h \tau_h W(t) \in V_h$ $P$-a.s. (see, e.g., Section 4.2.1 in \cite{da-prato}). Therefore, for any $L^2(\Gamma_h)$-orthonormal basis $\{ e_{j,h} \}_{j = 1}^{N_h}$ of $V_h$ we must have
    \begin{align*}
        \iota_h^{-1} \pi_h \tau_h W(t) = \sum_{j = 1}^{N_h} (\iota_h^{-1} \pi_h \tau_h W(t), e_{j,h})_{L^2(\Gamma_h)} e_{j,h}, \quad P\text{-a.s.}
    \end{align*}
    To check that the law of $(\iota_h^{-1} \pi_h \tau_h W(t), e_{j,h})_{L^2(\Gamma_h)}$ are those of independent scalar Brownian motions for each $j$, we note that
    \begin{align*}
        (\iota_h^{-1} \pi_h \tau_h W(t), e_{j,h})_{L^2(\Gamma_h)} = (\iota_h^{-1} \tau_h W(t), e_{j,h})_{L^2(\Gamma_h)} = (W(t), (\iota_h \delta_h^{-1/2}) \iota_h e_{j,h})_H.
    \end{align*}
    Now the result follows since $\{ (\iota_h \delta_h^{-1/2}) \iota_h e_{j,h} \}_{j = 1}^{N_h}$ defines an $H$-orthonormal basis of $\iota_h V_h$.
\end{proof}

To rewrite \eqref{eq:fully-discrete-scheme} as an equation involving the basis coefficients of $\iota_h^{-1} u_{h,\Delta t}$ in the nodal basis of $V_h$, we start by applying $\iota_h^{-1}$ to the equation. Solving the modified equation, with left and right hand side in $V_h$, is the same as solving the system of equations, 
\begin{align}\label{eq:system-of-equations-appendix}
\begin{split}
    &(\iota_h^{-1} (I + \Delta t A_{1,h})u_{h,\Delta t}(t_{n+1}) , \varphi_j)_{L^2(\Gamma_h)} \\
    &\qquad = ( \iota_h^{-1} u_{h,\Delta t}(t_n) + \iota_h^{-1} Q_k^{-\gamma}(A_{2,h}) \pi_h \tau_h (W(t_{n+1}) - W(t_n)), \varphi_j)_{L^2(\Gamma_h)},
\end{split}
\end{align}
$j = 1, \dots, N_h$, with $\{\varphi_j\}_{j = 1}^{N_h}$ the nodal basis of $V_h$. The following lemma is useful for setting up the system above.
\begin{lemma}\label{lemma:auxiliary-basis-coef}
    We have
    \begin{align*}
        \iota_h^{-1} Q_k^{-\gamma}(A_{2,h}) \pi_h \tau_h (W(t_{n+1}) - W(t_n)) = \sum_{j=1}^{N_h} \theta_j \varphi_j, \quad P\text{-a.s.}
    \end{align*}
    where 
    \begin{align*}
        \theta = 
        \begin{cases}
            \Delta t^{1/2} \frac{k \sin(\pi \gamma)}{\pi} \sum_{l = -M}^N e^{(1-\gamma) y_l} (e^{y_l} M_h + K_h)^{-1} M_h^{1/2} \varrho_h^n, \quad &\gamma \in (0,1), \\
            \Delta t^{1/2} K_h^{-1} M_h^{1/2} \varrho_h^n, \quad &\gamma = 1,
        \end{cases}
    \end{align*}
    the matrices $M_h$ and $K_h$ are given in \eqref{eq:fem-matrices}, and
    \begin{align*}
        \varrho_h^n := \Delta t^{-1/2} M_h^{-1 / 2}
        \begin{pmatrix}
            &(\iota_h^{-1} \pi_h \tau_h (W(t_{n+1}) - W(t_n)), \varphi_1)_{L^2(\Gamma_h)} \\
            &\vdots \\
            &(\iota_h^{-1} \pi_h \tau_h (W(t_{n+1}) - W(t_n)), \varphi_{N_h})_{L^2(\Gamma_h)}
        \end{pmatrix}
        \sim \mathcal{N}(0,I)
    \end{align*}
    are $N_h$-dimensional multivariate Gaussian. 
\end{lemma}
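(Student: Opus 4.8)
\emph{Plan.} The strategy is to compute the nodal-basis coefficients of $\iota_h^{-1} Q_k^{-\gamma}(A_{2,h}) \pi_h \tau_h (W(t_{n+1}) - W(t_n))$ by first trivializing the lift, then passing from operator equations to linear systems in the finite element matrices, and finally identifying the law of $\varrho_h^n$. Set $g := \iota_h^{-1} \pi_h \tau_h (W(t_{n+1}) - W(t_n))$, which lies in $V_h$ $P$-almost surely by Lemma \ref{lemma:discrete-cylindrical-wiener-process}. Because $A_{2,h} = \iota_h A_{2,h}' \iota_h^{-1}$, every resolvent factors as $(e^{jk} I + A_{2,h})^{-1} = \iota_h (e^{jk} I + A_{2,h}')^{-1} \iota_h^{-1}$, and since $Q_k^{-\gamma}$ is a fixed linear combination of such resolvents (with the convention $Q_k^{-1}(A_{2,h}) = A_{2,h}^{-1}$ when $\gamma = 1$), conjugation by $\iota_h$ gives $\iota_h^{-1} Q_k^{-\gamma}(A_{2,h}) \pi_h \tau_h (W(t_{n+1}) - W(t_n)) = Q_k^{-\gamma}(A_{2,h}') g$ in $V_h$. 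It therefore suffices to expand $Q_k^{-\gamma}(A_{2,h}') g$ in the nodal basis.

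\emph{Matrix form.} Writing $g = \sum_i \eta_i \varphi_i$ and testing against $\varphi_l$ in $L^2(\Gamma_h)$, the definition of $\varrho_h^n$ reads $M_h \eta = \Delta t^{1/2} M_h^{1/2} \varrho_h^n$, so $\eta = \Delta t^{1/2} M_h^{-1/2} \varrho_h^n$. For a single resolvent, I would solve $(e^{jk} I + A_{2,h}') z = w$ by testing with $\varphi_l$: using $(A_{2,h}' z, \varphi_l)_{L^2(\Gamma_h)} = a_{2,h}(z, \varphi_l)$ together with the definitions of $M_h, K_h$ in \eqref{eq:fem-matrices} turns this into $(e^{jk} M_h + K_h) \mathbf{z} = M_h \mathbf{w}$ for the coefficient vectors (and into $K_h \mathbf{z} = M_h \mathbf{w}$ in the case $\gamma = 1$). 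Applying this with $w = g$ to each term of the quadrature \eqref{eq:quadrature}, summing with the weights $\frac{k \sin(\pi\gamma)}{\pi} e^{(1-\gamma)jk}$, and inserting $\eta = \Delta t^{1/2} M_h^{-1/2} \varrho_h^n$ collapses $M_h M_h^{-1/2}$ to $M_h^{1/2}$ and reproduces exactly the stated formula for $\theta$ (with $y_l = lk$).

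\emph{Law of $\varrho_h^n$.} The main obstacle is verifying $\varrho_h^n \sim \mathcal{N}(0,I)$, which is where the normalization by $M_h^{-1/2}$ is essential. I would expand $g$ in the $L^2(\Gamma_h)$-orthonormal basis $\{e_{j,h}\}$ supplied by Lemma \ref{lemma:discrete-cylindrical-wiener-process}; the increments of the associated scalar Brownian motions are independent $\mathcal{N}(0, \Delta t)$, so the Gaussian vector $\mathbf{b}$ with entries $b_l = (g, \varphi_l)_{L^2(\Gamma_h)}$ has covariance $\Delta t\, E^\top E$, where $E_{jl} = (e_{j,h}, \varphi_l)_{L^2(\Gamma_h)}$. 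Parseval's identity in $V_h$ yields $E^\top E = M_h$, hence $\mathbf{b}$ has covariance $\Delta t M_h$ and $\varrho_h^n = \Delta t^{-1/2} M_h^{-1/2} \mathbf{b}$ has mean zero and covariance $I$. I expect this covariance identification, rather than the routine finite element bookkeeping of the previous step, to be the delicate point, since it couples the abstract cylindrical-Wiener structure on $\iota_h V_h$ to the concrete nodal basis through $M_h$.
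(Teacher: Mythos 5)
Your proposal is correct and follows essentially the same route as the paper's proof: it reduces, via conjugation by the lift, to $Q_k^{-\gamma}(A_{2,h}')$ applied to $g \in V_h$, converts each resolvent equation into a linear system in the matrices of \eqref{eq:fem-matrices} by testing against the nodal basis, and identifies the covariance of the nodal inner-product vector as $\Delta t M_h$ using Lemma \ref{lemma:discrete-cylindrical-wiener-process}. The only cosmetic difference is that you establish that covariance by an explicit Parseval computation in the orthonormal basis $\{e_{j,h}\}$, whereas the paper reads the identity $E[(f,\varphi_i)_{L^2(\Gamma_h)}(f,\varphi_j)_{L^2(\Gamma_h)}] = \Delta t\,(\varphi_i,\varphi_j)_{L^2(\Gamma_h)}$ directly off that lemma.
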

\begin{proof}
    Set for ease of notation,
    \begin{align*}
        f := \iota_h^{-1} \pi_h \tau_h (W(t_{n+1}) - W(t_n)).
    \end{align*}
    By Lemma \ref{lemma:discrete-cylindrical-wiener-process}, $f$ is an $L^2(\Gamma_h)$-valued Gaussian random variable with covariance operator $\Delta t \mathcal{P}_h$. Recall $A_{j,h} := \iota_h A_{j,h}' \iota_h^{-1}$, and note that 
    \begin{align*}
        \iota_h^{-1} Q_k^{-\gamma}(A_{2,h}) \pi_h \tau_h (W(t_{n+1}) - W(t_n)) = Q_k^{-\gamma}(A_{2,h}') f.
    \end{align*}

    For the expression for $\theta$ in the case of $\gamma = 1$, note that $(A_{2,h}')^{-1} f$ is the solution $g \in V_h$ of
    \begin{align*}
        A_{2,h}' g = f.
    \end{align*}
    Since $f \in V_h$ $P$-a.s. by Lemma \ref{lemma:discrete-cylindrical-wiener-process}, solving the equation above is the same as solving the system of equations,
    \begin{align}\label{eq:system-1}
        \begin{pmatrix}
            (A_{2,h}' g, \varphi_1)_{L^2(\Gamma_h)} \\
            \vdots \\
            (A_{2,h}' g, \varphi_{N_h})_{L^2(\Gamma_h)}
        \end{pmatrix}
        =
        \begin{pmatrix}
            (f, \varphi_1)_{L^2(\Gamma_h)} \\
            \vdots \\
            (f, \varphi_{N_h})_{L^2(\Gamma_h)}
        \end{pmatrix}.
    \end{align}
    By Lemma \ref{lemma:discrete-cylindrical-wiener-process} we have,
    \begin{align*}
        E[(f, \varphi_i)_{L^2(\Gamma_h)} (f, \varphi_j)_{L^2(\Gamma_h)}] = \Delta t (\varphi_i, \varphi_j)_{L^2(\Gamma_h)},
    \end{align*}
    and so the covariance matrix of $((f, \varphi_1)_{L^2(\Gamma_h)}, \dots, (f, \varphi_{N_h})_{L^2(\Gamma_h)})^T$ is the (scaled) mass matrix, $\Delta t M_h$. It follows that 
    \begin{align*}
        ((f, \varphi_1)_{L^2(\Gamma_h)}, \dots, (f, \varphi_{N_h})_{L^2(\Gamma_h)})^T = \Delta t^{1/2} M_h^{1/2} \varrho_h^n,
    \end{align*}
    $P$-a.s., with $\varrho_h^n$ as above. For the left hand side of \eqref{eq:system-1} we insert $g = \sum_{j = 1}^{N_h} \theta_j \varphi_j$, and find, using that $(A_{2,h}' \varphi_j, \varphi_i)_{L^2(\Gamma_h)} = a_{2,h}(\varphi_j, \varphi_i)$,
    \begin{align*}
        \begin{pmatrix}
            (A_{2,h}' g, \varphi_1)_{L^2(\Gamma_h)} \\
            \vdots \\
            (A_{2,h}' g, \varphi_{N_h})_{L^2(\Gamma_h)}
        \end{pmatrix}
        = K_h
        \begin{pmatrix}
            \theta_1 \\
            \vdots \\
            \theta_{N_h}
        \end{pmatrix},
    \end{align*}
    where $K_h$ is as in \eqref{eq:fem-matrices}. Combining these observations, we see that \eqref{eq:system-1} has solution $(g_1, \dots, g_{N_h})$ given by,
    \begin{align*}
        (g_1, \dots, g_{N_h})^T = \Delta t^{1/2} K_h^{-1} M_h^{1/2} \varrho_h^n.
    \end{align*}

    The identity for $\theta$ when $\gamma \in (0,1)$ follows similarly. Note that,
    \begin{align*}
        Q_k^{-\gamma}(A_{2,h}') f = \frac{k \sin(\pi \gamma)}{\pi} \sum_{j = -M}^N e^{(1-\gamma) y_j} g^{(j)}, 
    \end{align*}
    where $g^{(j)} \in V_h$, is the solution of the equation, 
    \begin{align*}
        (e^{y_j} I + A_{2,h}') g^{(j)} = f, \quad j = -M, \dots, N.
    \end{align*}
    As for the previous term, to solve this equation we insert $g^{(j)} = \sum_{l=1}^{N_h} \theta^{(j)}_l \varphi_l$ into the equation and integrate against the nodal basis, to find,
    \begin{align*}
        (e^{y_j} M_h + K_h) 
        \begin{pmatrix}
            \theta_1^{(j)} \\
            \vdots \\
            \theta_{N_h}^{(j)}
        \end{pmatrix}
        = \Delta t^{1/2} M_h^{1/2} \varrho_h^n.
    \end{align*}
    Summing up the vectors $(\theta^{(j)}_1, \dots, \theta^{(j)}_{N_h}), \ j = -M, \dots, N$ we find the coeffcients, $\theta$, of $Q_k^{-\gamma}(A_{2,h}') f$ in the nodal basis.
\end{proof}

Now we can insert the identities of Lemma \ref{lemma:auxiliary-basis-coef} into the system of equations \eqref{eq:system-of-equations-appendix} to arrive at \eqref{eq:scheme-basis-coefficients}. Note that for any $g = \sum_{j=1}^{N_h} \theta_j \varphi_j$, we have,
\begin{align*}
    \begin{pmatrix}
        (g, \varphi_1)_{L^2(\Gamma_h)} \\
        \vdots \\
        (g, \varphi_{N_h})_{L^2(\Gamma_h)}
    \end{pmatrix}
    = M_h \theta,
\end{align*} 
and so,
\begin{align*}
    \begin{pmatrix}
        (\iota_h^{-1} (I + \Delta t A_{1,h})u_{h,\Delta t}(t_{n+1}), \varphi_1)_{L^2(\Gamma_h)} \\
        \vdots \\
        (\iota_h^{-1} (I + \Delta t A_{1,h})u_{h,\Delta t}(t_{n+1}), \varphi_{N_h})_{L^2(\Gamma_h)}
    \end{pmatrix}
    = (M_h + \Delta t T_{h}) \alpha^{n+1},
\end{align*} 
while, 
\begin{align*}
    &
    \begin{pmatrix}
        (\iota_h^{-1} Q_k^{-\gamma}(A_{2,h}) \pi_h (W(t_{n+1}) - W(t_n)), \varphi_1)_{L^2(\Gamma_h)} \\
        \vdots \\
        (\iota_h^{-1} Q_k^{-\gamma}(A_{2,h}) \pi_h (W(t_{n+1}) - W(t_n)), \varphi_{N_h})_{L^2(\Gamma_h)}
    \end{pmatrix}
    \\
    &\qquad= 
    \begin{cases}
        M_h \Delta t^{1/2} \frac{k \sin(\pi \gamma)}{\pi} \sum_{j = -M}^N e^{(1-\gamma) y_j} (e^{y_j} M_h + K_h)^{-1} M_h^{1/2} \varrho_h^n, \quad &\gamma \in (0,1), \\
        M_h \Delta t^{1/2} K_{h}^{-1} M_h^{1/2} \varrho_h^n, \quad &\gamma = 1.
    \end{cases}
\end{align*}

\end{document}